\date{\today}
\newtheorem{thm}{Theorem}[section]
\newtheorem{lem}[thm]{Lemma}
\theoremstyle{definition}
\newtheorem{eg}[thm]{Example}
\newtheorem{rmk}{Remark}
\newtheorem{prop}[thm]{Proposition}
\newtheorem{Def}[thm]{Definition}
\newcommand{\R}{\mathbb{R}}
\newcommand{\C}{\mathbb{C}}
\newcommand{\Z}{\mathbb{Z}}
\newcommand{\norm}[1]{\left\lVert#1\right\rVert}
\newcommand{\Lap}{\Delta}
\title{From tropical curves to special Lagrangians}
\author{Shih-Kai Chiu \and Yang Li \and Yu-Shen Lin}
\date{}
\begin{document}
	
	\maketitle

\begin{abstract}
  We show that any locally planar tropical curve $\Gamma \subset \mathbb{R}^n$ (with unit edge weights) can be realized as the limit of the rescaled moment map images of a family of special Lagrangian submanifolds in $T^*T^n$ with respect to the Euclidean structure. This is based on a gluing construction that matches special Lagrangian local models to the combinatorics of $\Gamma$, thereby establishing a direct link between tropical geometry and special Lagrangian geometry.
\end{abstract}

    \section{Introduction}\label{sec: introduction}

Inside a complex $n$-dimensional Calabi-Yau manifold $(X,\omega, \Omega)$, a special Lagrangian submanifold $L\subset X$ of phase $\hat{\theta}\in \R$ is a real $n$-dimensional submanifold such that
    \begin{align*}
      \omega|_L = 0, \quad  \operatorname{Im}(e^{-i\hat\theta}\Omega)|_L=0.
    \end{align*}
 Equivalently, $L$ is calibrated by $\operatorname{Re}(e^{-i\hat\theta}\Omega)$ in the sense of Harvey–Lawson~\cite{HL}, hence $L$ is a minimal surface. The Strominger-Yau-Zaslow (SYZ) picture \cite{SYZ} suggests that a Calabi-Yau manifold $X$ near the large complex structure limit should admit a (singular) special Lagrangian $T^n$-fibration
$\pi: X \to B$,
    whose mirror $\hat X$ is obtained by fiberwise $T$-duality modulo instanton corrections. The base $B$ of the fibration admits a natural integral affine structure away from the discriminant locus, and the tropical geometry on $B$ controls both the Lagrangian geometry of $X$ (the `A-side') and the algebraic geometry of $\hat{X}$ (the `B-side').     
    
    
    In this paper, we seek to bridge tropical geometry in $B$ and \emph{special Lagrangians} in $X$, in the simplest case of $X=T^*T^n$, equipped with the Euclidean K\"ahler structure
\[
\omega=\sum d\theta_i\wedge d\mu_i, \quad \Omega= \bigwedge_1^n (d\theta_i+ \sqrt{-1} g_{ij} d\mu_j), \quad g=g_{ij} d\mu_id\mu_j+ g^{ij} d\theta_id\theta_j,
\]
where $\mu_i, \theta_i$ are the action-angle coordinates, and $(g_{ij})$ is a positive definite $n\times n$ matrix. 
In particular, the torus fibres of the natural projection $\pi: X=T^*T^n\to B:= H^1(T^n,\mathbb{R})$ are special Lagrangians of phase zero, and $\mu_i$ are affine coordinates on the base.

 Our main theorem is the following:
    
	\begin{thm}\label{thm:tropical}
      Let $n \ge 2$, $\hat{\theta}\in (0,\pi)$, and let $\Gamma \subset \mathbb{R}^n$ be the image of a locally planar tropical curve (see Def. \ref{Def:tropicalcurve}).  Then for all sufficiently large $T > 0$, there exists a special Lagrangian submanifold $ L_T \subset T^*T^n$ of phase $\hat\theta$ such that $\frac{1}{T}\pi(L_T) \longrightarrow \Gamma$ in the Hausdorff distance as $T \to +\infty$.
    \end{thm}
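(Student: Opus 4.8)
The plan is to build $L_T$ by a gluing construction that assembles special Lagrangian local models along the combinatorial skeleton of the rescaled tropical curve $T\cdot\Gamma$. I would organize the proof in three stages: local models, approximate solution by gluing, and perturbation to an exact solution.

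\textbf{Local models.}
First I would identify the two kinds of building blocks dictated by the tropical combinatorics. Away from the vertices, each edge of $\Gamma$ is a straight segment; over a tubular neighborhood of such an edge the geometry is (after an integral affine change of coordinates) a product $\R \times (\text{lower-dimensional model})$, so the edge model should be a Lagrangian cylinder whose moment-map image is a thin neighborhood collapsing onto the edge. Because $X=T^*T^n$ is flat, I expect these edge models to be given explicitly by Harvey--Lawson-type special Lagrangians of phase $\hat\theta$: in suitable coordinates one can write down $\operatorname{SU}(m)$-invariant or "Lawlor neck" solutions of $\operatorname{Im}(e^{-i\hat\theta}\Omega)|_L=0$, which asymptotically look like a pair of affine Lagrangian planes joined by a neck. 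The vertex models are the genuinely new ingredient: at a trivalent (or higher-valence) vertex the balancing condition of the tropical curve must match the condition that several asymptotic Lagrangian planes can be glued by a single special Lagrangian with controlled asymptotics. I would construct these as special Lagrangians asymptotic to a union of planes determined by the primitive edge directions, using the locally planar hypothesis to reduce the vertex to a two-dimensional transverse problem times a flat factor.

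\textbf{Gluing the approximate solution.}
Next I would rescale: replace $\Gamma$ by $T\Gamma$ and insert the vertex models (at scale $O(1)$) at the lattice vertices and the edge/neck models along the edges (at length scale $O(T)$), patching them together with cutoff functions in the overlap regions where both descriptions are valid. This produces a closed $n$-manifold $L_T^{\mathrm{app}}$ that is Lagrangian and satisfies $\operatorname{Im}(e^{-i\hat\theta}\Omega)|_{L_T^{\mathrm{app}}}=0$ up to an error supported in the gluing annuli, with the error decaying as $T\to\infty$. At this stage I would record the key geometric estimate: $\tfrac1T\pi(L_T^{\mathrm{app}})\to\Gamma$ in Hausdorff distance, which is essentially built into the construction since the moment-map image of each piece collapses onto the corresponding edge or vertex.

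\textbf{Perturbation to an exact solution.}
Finally I would correct $L_T^{\mathrm{app}}$ to an exact special Lagrangian by deforming it through nearby Lagrangians. Using the identification of a Weinstein neighborhood of $L_T^{\mathrm{app}}$ with $T^*L_T^{\mathrm{app}}$, Lagrangian deformations are graphs of closed (indeed exact) one-forms $du$, and the special Lagrangian condition becomes a nonlinear elliptic PDE for $u$ whose linearization is (up to lower-order terms) the Hodge Laplacian $\Delta_{L_T^{\mathrm{app}}}$. The strategy is a quantitative inverse function theorem: show the approximate error is small in a weighted Hölder (or Sobolev) norm adapted to the neck regions, and invert the linearized operator with a norm bound uniform in $T$. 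The main obstacle, and the crux of the whole argument, is establishing this uniform invertibility: the Laplacian on $L_T^{\mathrm{app}}$ has small eigenvalues and possibly a nontrivial (approximate) cokernel coming from the harmonic one-forms/topology created by the gluing, so I expect to need weighted analysis on the noncompact local models, a matching of decay rates between the edge and vertex models, and careful bookkeeping to project out or obstruct the harmonic pieces. Once uniform invertibility is in hand, a contraction-mapping argument yields $u$ with $\|u\|$ small, the graph of $du$ gives the genuine special Lagrangian $L_T$, and the closeness of $L_T$ to $L_T^{\mathrm{app}}$ preserves the Hausdorff convergence $\tfrac1T\pi(L_T)\to\Gamma$.
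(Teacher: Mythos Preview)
Your three-stage outline matches the paper's architecture, but there are two genuine gaps where your plan diverges from what actually works.

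\textbf{Local models.} The edge models are not Lawlor necks or anything desingularizing an intersection of planes; they are just flat $T^{n-1}$-invariant cylinders $T^{n-1}\times\R$ sitting over an affine line in $B$. All the interesting geometry lives at the vertices. Your description of the vertex model (``reduce to a two-dimensional transverse problem times a flat factor'') is right in spirit, but you are missing the mechanism that makes it work: after the $T^{n-2}$ symplectic reduction, the special Lagrangian equation on the quotient $T^*T^2$ is a \emph{hyperk\"ahler rotation} of the holomorphic curve equation in $(\C^*)^2$. So the vertex model is literally the lift of a smooth algebraic curve $\{f=0\}\subset(\C^*)^2$ with prescribed Newton polytope, and the balancing and ``phase balancing'' conditions on the tropical curve are what allow the asymptotic cylinders of adjacent vertex models to be matched along a common edge. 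Without this, you have no source of vertex models.

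\textbf{Perturbation.} The more serious issue is your claim that ``Lagrangian deformations are graphs of closed (indeed exact) one-forms $du$'' and that the linearized operator is the scalar Laplacian. If you insist on \emph{exact} deformations $du$ and try to invert $\Delta$, you will \emph{not} get a uniform-in-$T$ bound when $\Gamma$ has loops. The paper is explicit about this: the local potentials $u_v$ on the vertex pieces differ by end-dependent constants, and there is no way to patch them into a global function without introducing cutoff errors that are $O(1)$ rather than small. The fix is to work with \emph{closed but not globally exact} $1$-forms and to invert $d^*$ (not $\Delta$) on closed forms, building a parametrix that is exact on each vertex region $U_v$ but contains extra terms $\sum_v c_v\,d\chi_v$ carrying the mismatched constants. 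The partition of unity $\{\chi_v\}$ is also used to kill the local obstructions $\int_L \chi_v f$ before solving on the model spaces. Your intuition that ``small eigenvalues/approximate cokernel'' are the enemy is correct, but projecting onto harmonic pieces in the exact setting does not salvage uniform bounds; you have to enlarge the deformation space.
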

    
    Our result is largely motivated by the works of Matessi~\cite{Matessi21} and Mikhalkin~\cite{Mikhalkin19}, who established tropical-Lagrangian correspondence for tropical hypersurfaces $\Gamma$ (resp. tropical curves), by suitable Lagrangian smoothing $L$ of the phase lift of $\Gamma$. Our new contribution is to make $L$ into a \emph{special Lagrangian}, using a PDE based gluing construction.

    There are several open directions. 
    More generally, we expect that suitable tropical varieties $\Gamma\subset B$ of dimension $2\leq m\leq n-1$ may give rise to special Lagrangians in $T^*T^n$ whose projection to $B$ is a thickening of $\Gamma$. A local model in the case of the standard tropical hypersurface has been obtained in \cite{specialLagpairofpants}, which generalizes the pair of pants to arbitrary dimensions. In another direction, one may try to construct special Lagrangians inside compact Calabi-Yau manifolds, whose projection to the SYZ base is the thickening of a one-dimensional graph.

\begin{rmk}
On the $B$-side, the enumerative correspondence between tropical curves and holomorphic curves was initiated by Mikhalkin~\cite{Mikhalkin06} in the case of $(\C^*)^2$, and 
later generalized by Nishinou-Siebert~\cite{Nishinou-Siebert} to curves in higher dimensional toric varieties via toric degenerations. Unlike the holomorphic curve case where obstruction may arise \cite{Mikhalkin04}\cite{Speyer}, deformations of smooth special Lagrangians are unobstructed \cite{Marshall}\cite{McLean}, so obstruction conditions do not show up on the tropical curves in our setting. We further note that Hicks~\cite{Hicks} has related unobstructedness of Lagrangians in the sense of Floer homology with the realizability of tropical curves as holomorphic curves. It would be interesting to explore potential interactions between our construction and these Floer-theoretic perspectives.

  The Mikhalkin correspondence has also motivated conjectures in the special holomomy setting. In the case of $G_2$ manifolds, the role of SYZ fibrations is played by Kovalev-Lefschetz fibrations, and gradient cycles play the role of tropical curves.  Donaldson-Scaduto \cite{Donaldson-Scaduto} conjectured that the enumerative geometry of associative submanifolds is governed by gradient cycles in the base (see also \cite{Doan-Walpuski}\cite{Esfahani-Li}). A dimensional reduction is Calabi-Yau 3-folds with K3-fibrations, where one expects some correspondence between certain special Lagrangians and gradient cycles (see \cite{Chiu-Lin} for partial progress). 

\end{rmk}

    We now outline the proof of Thm.~\ref{thm:tropical}. In Section~\ref{sec: local models}, we introduce vertex and edge models for the special Lagrangians. Since the tropical curves under consideration are locally planar, these models are constructed as $T^{n-2}$-invariant preimages of holomorphic curves under hyperk\"ahler rotation. In Section~\ref{sec: matching data}, we introduce a slightly more general analogue of tropical curves, called matching data, which specifies the requirements for constructing approximate special Lagrangians by gluing these vertex and edge models.  In particular, in Prop.~\ref{prop: tropicaltomatchingdata} we show that tropical curves as defined in Def.~\ref{Def:tropicalcurve} give rise to matching data. Section~\ref{sec: perturbation} is the main analytic part of the paper. In Thm.~\ref{thm:main}, we prove that approximate solutions arising from matching data can be perturbed to special Lagrangians, provided the edge lengths are sufficiently large. A crucial technical ingredient is Prop.~\ref{prop:dstar}, where we construct a right inverse to the linearized operator $d^*$ acting on closed $1$-forms, with uniform bounds. This is proved using a parametrix construction, in which a partition of unity is used to remove local obstructions to solvability. The drawback of this method is that the resulting $1$-forms are not exact in general, namely the perturbation is not necessarily Hamiltonian. \newline

    \noindent{\bf Acknowledgments.}
     S. Chiu was supported by the NSF grant DMS-
1928930, when part of the work was performed at SLMath (formerly MSRI). Y. Li was supported by the Royal Society URF, and would like to thank Jeff Hicks for his interest. Y. Lin was supported by NSF grant DMS-2204109, the Travel Support for Mathematicians and a Simons Fellowship from the Simons Foundation.

	\section{Local models} \label{sec: local models}

	In this section, we explain the local models needed for the gluing constructions of special Lagrangians. Let $B= H^1(T^n,\R)\simeq \R^n$  be equipped with the natural integral affine structure, and $ g_{ij} dx^i dx^j$ is a given Euclidean metric on $B$.  The cotangent bundle $T^*T^n$ is equipped with the Euclidean K\"ahler structure
	\[
	\omega=\sum d\theta_i\wedge d\mu_i, \quad \Omega= \bigwedge_1^n (d\theta_i+ \sqrt{-1} g_{ij} d\mu_j), \quad g=g_{ij} d\mu_id\mu_j+ g^{ij} d\theta_id\theta_j.
	\]
	There is a projection map $\pi: T^*T^n\to B$.

	\subsection{Tropical curve}

	We recall some basic definitions of tropical curves (with edge weights equal to one).

	\begin{Def}\label{Def:tropicalcurve}
		A \emph{tropical curve} $(G,h)$ on $B$ is a connected graph $G=(V,E)$ with finite vertex set $V$ and edge set $E$, and a continuous map $h:G\rightarrow B$, such that \begin{enumerate}
			\item For every $e\in E$, $h(e)$ is affine with a nonzero rational slope. There are two types of edges: the internal edges have finite lengths, and the external edges are unbounded rays extending to infinity.

			\item (\emph{Balancing condition}) For each $v\in V$ and $e\in E$ adjacent to $v$, let $f_e$ denote the primitive tangent vector of $h(e)$ pointing outward from $h(v)$. Then 
			\begin{align*}
				\sum_{\text{e adjacent to v}} f_e=0.
			\end{align*} 
		\end{enumerate}
		Furthermore, we say that a tropical curve $(G,h)$ is called \emph{locally planar} if every vertex $v$ has valency at least $3$, and $\text{span}\{e : e \mbox{ adjacent to }v\}$ is a rank-two lattice.               
	\end{Def}

	For brevity, we will sometimes abbreviate $h(v), h(e)$ as $v, e$, As a caveat, the same $h(e)$ can be the image of several edges in the combinatorial graph $G$.
	

	\begin{Def}  Given tropical curve $(G,h)$ and a vertex $v\in V$, its localized tropical curve $(G_v,h_v)$ is defined as follows: $G_v$ is a tree consisting of a single vertex $v$ and unbounded edges correspond to edges in $G$ adjacent to $v$. Define $h_v(v)=h(v)$. For each edge $e$, then $h_v(e)$ is the ray which emanates from $h(v)$ and contains $h(e)$.

	\end{Def}

	\subsection{Edge model}\label{sec: edge}

	The local models for the edges $e$ are cylindrical special Lagrangians $L$ diffeomorphic to $T^{n-1}\times \R$, which are invariant under a given primitive subtorus $T^{n-1}\subset T^n$.

	Let $f_1,\ldots f_{n-1}\in H_{n-1}(T^n)$ be a $\Z$-basis of generators for the subtorus $T^{n-1}$, which induce moment maps $\mu_{f_i}$ by $d \mu_{f_i}=\omega(f_i, )$. Then $f_e=\bigwedge_1^{n-1} f_i \in H_{n-1}(T^n,\mathbb{Z})\simeq H^1(T^n,\mathbb{Z})$ specifies a rational direction in the base $B$, as well as the 1-form $d\theta_e(-)= \operatorname{Re}\Omega(f_1,\ldots f_{n-1},-)$, where $\theta_e$ is a circle coordinate on $T^n/T^{n-1}$. The Lagrangian condition for $L$ is equivalent to
	\[
	\mu_{f_i}= \text{const}, \quad i=1,\ldots n-1,
	\] 
	In other words, the projection of the special Lagrangian in $B$ is contained in an affine line parallel to $f_e$. The condition $\text{Im}(e^{-i\hat{\theta}}\Omega)=0$ translates into
	\[
	\operatorname{Im} (  e^{-i\hat{\theta}}\Omega) (f_1,\ldots f_{n-1}, )=  \operatorname{Im} (  e^{-i\hat{\theta}} ( d\theta_e+ \sqrt{-1} g_{ej} d\mu_j ))=0. 
	\]
	By assumption $\hat{\theta}\in (0,\pi)$, so the angle coordinate $\theta_e$ depends affine linearly on the moment coordinates:
	\begin{equation}\label{phaseonedge}
		\hat{\theta}_e:=  \theta_e- \cot \hat{\theta} g_{ej}\mu_j= \text{const} \in  T^n/T^{n-1}.
	\end{equation}

	\subsection{Vertex model}

	The local model for the vertices is invariant under a primitive subtorus $T^{n-2}\subseteq T^n$, and is diffeomorphic to a product of $T^{n-2}$ and a surface in $T^*T^2$. After suitable coordinate changes, without loss the subtorus is generated by $\partial_{\theta_3},\ldots \partial_{\theta_n}$. 
	The K\"ahler quotient $T^*T^n // T^{n-2}$ is $T^*T^2$ with 
	\[
	\begin{cases}
		\omega_{red}= d\theta_1\wedge d\mu_1+ d\theta_2\wedge d\mu_2,
		\\
		\Omega_{red}= \frac{1}{ \sqrt{ g_{11} g_{22}- g_{12}^2 }  } (d\theta_1+ \sqrt{-1} (g_{11} d\mu_1+ g_{12} d\mu_2) )\wedge (d\theta_2+ \sqrt{-1} (g_{21} d\mu_1+ g_{22} d\mu_2).
	\end{cases}
	\]
	Notice that  $\big(\omega_{red}, \text{Re}(e^{-i\hat{\theta} } \Omega_{red}),   \text{Im}(e^{-i\hat{\theta} } \Omega_{red})\big) $ forms a hyperk\"ahler triple. Then $L$ is a $T^{n-2}$-invariant special Lagrangian, iff it is contained in a fibre of the moment map $(\mu_3,\ldots \mu_n)$, and its image $\bar{L}$ in the K\"ahler quotient is a special Lagrangian:
	\[
	\begin{cases}
		\omega_{red}= 0,
		\\
		\text{Im}(e^{-i\hat{\theta} } \Omega_{red})=0.
	\end{cases}
	\]
	This is equivalent to a complex curve in the symplectic reduction with the new complex structure determined by the holomorphic volume form $$\text{Im}(e^{-i\hat{\theta} } \Omega_{red})+ \sqrt{-1}\omega_{red}. $$

	After some linear algebra, we find the new complex structure is identified with $(\C^*)^2$ with complex coordinates
	\begin{align}\label{eq: HK rel}
		\begin{cases}
			z_1= \exp \left( - \frac{ \sqrt{ g_{11} g_{22}- g_{12}^2 }     }{ \sin \hat{\theta} } \mu_2+   \sqrt{-1} ( -\cot \hat{\theta} ( g_{11}\mu_1+ g_{12}\mu_2 )+ \theta_1   )      \right)
			\\
			z_2=\exp \left(  \frac{ \sqrt{ g_{11} g_{22}- g_{12}^2 }     }{ \sin \hat{\theta} } \mu_1+   \sqrt{-1} ( -\cot \hat{\theta} ( g_{21}\mu_1+ g_{22}\mu_2 )+ \theta_2   )      \right).
		\end{cases}
	\end{align}
	The projection map to the base can be identified as 
	\[
	\text{Log}:(\C^*)^2\to \R^2.
	\]

	\begin{eg}
		After hyperk\"ahler rotation, the edge model cylinder becomes $z_1^az_2^b=\alpha$ for some $\alpha\in \mathbb{C}$, and its image in $\R^2$ is the affine line $ax+by=\log{|\alpha|}$.
	\end{eg}

	\subsubsection{Algebraic examples}

	We now discuss the examples from toric geometry.
	Let $N$ be a rank two lattice and $M$ be the dual lattice of $N$. Write 
	\[
	N_{\mathbb{R}}=N\otimes \mathbb{R},\quad M_{\mathbb{R}}=M\otimes \mathbb{R}, \quad N_{\mathbb{C}}=N\otimes \mathbb{C},\quad M_{\mathbb{C}}=M\otimes \mathbb{C}
	\] 
	We identify  $(\mathbb{C}^*)^2$ as the torus in $N_\C$. Each $m\in M$ parametrizes a monomial $z^m$. Let $f=\sum_m a_m z^m$ be a Zariski generic Laurent polynomial with Newton polytope $\Delta\subseteq M$, where
	\begin{align*}
		\Delta=\{m\in M_{\mathbb{R}}|\langle n_i,m\rangle \leq \nu_i\}
	\end{align*} for some primitive $n_i\in N$ and $\nu_i\in \mathbb{Z}$.
	Its zero loci $\mathcal{C}_f= \text{Zero}(f)$ defines a \emph{holomorphic curve} in $(\mathbb{C}^*)^2$, which we assume to be smooth, and $\mbox{Log}(\mathcal{C}_f)$ is the thickening of a \emph{tropical curve}. Up to hyperk\"ahler rotation, we can regard $\mathcal{C}_f$ as a special Lagrangian $\bar{L}\subset T^*T^2$, hence we get a model special Lagrangian $L\subset T^*T^n$.

	Let $m_0, m_1= m_0+ m_F, \ldots, m_l= m_0+ l m_F$ be the lattice points on a facet 
	\begin{align*}
		F=\{m\in \Delta\cap M|\langle m, n_F\rangle=\nu \}\subseteq\partial \Delta,
	\end{align*}
	where $n_F\in N$ is an outward-pointing primitive vector. The unbounded edges of the tropical curve in the direction $n_F$ are in one-to-one correspondence with the roots $\{ \alpha_1,\ldots \alpha_l\}$ of the polynomial
	\begin{align*}
		\sum_{k=0}^l a_{m_k}z^{k m_F}=0,
	\end{align*} 
	Since $f$ is generic, the roots are \emph{distinct}. 
	The following exponential asymptote is helpful for gluing the local models. We take $m_F'$ such that $m_F, m_F'$ is a basis of $M$, and $\langle m_F, n_F\rangle=1$.

	\begin{lem}\label{lem: asymp cyl} 
		For $R\gg 1$, each component $\mathcal{C}_{f,i}$ of 
		\begin{align}
			\mathcal{C}_f\cap \text{Log}^{-1}\{x\in N_{\mathbb{R}}|\langle x, m_F' \rangle > R \},
		\end{align}
		along the end of $\mathcal{C}_f$ parallel to $n_F$,
		is an $O(e^{-C'R})$-graph over the asymptotic cylinder $z^{m_F}=\alpha_i$.

		Furthermore, after hyperk\"ahler rotation identification, the special Lagrangian $L$ is asymptotically cylindrical, and along the $\mathcal{C}_{f,i}$ end, it is the graph of an exact 1-form $dc_i$ over the cylinder  with $\| c_i\|_{C^{k,\alpha}}<Ce^{-C'R}$.
	\end{lem}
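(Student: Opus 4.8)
The plan is to study the end of $\mathcal{C}_f$ in coordinates adapted to the facet $F$, where the equation $f=0$ degenerates to an exponentially small perturbation of the facet polynomial, to solve it by a quantitative implicit function argument, and finally to transport the resulting graph estimate across the hyperk\"ahler rotation by a Weinstein neighbourhood plus cohomology-vanishing argument. First I would introduce the monomial coordinates $w=z^{m_F}$ and $v=z^{m_F'}$ on $(\C^*)^2$, which is legitimate since $\{m_F,m_F'\}$ is a $\Z$-basis of $M$; here $m_F$ is parallel to $F$, so $\langle m_F,n_F\rangle=0$, while $\langle m_F',n_F\rangle=1$. Then $\langle x,m_F'\rangle=\log|v|$, so the region in the lemma is $\{|v|>e^{R}\}$. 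Writing $m=am_F+bm_F'$ for $m\in\Delta$ gives $\langle m,n_F\rangle=b$, whence the facet monomials are exactly those with $b=\nu$ and every remaining monomial has $b\le\nu-1$. Factoring out $v^{\nu}$ yields $f=v^{\nu}g(w,v)$ with $g(w,v)=w^{a_0}P(w)+O(|v|^{-1})$ uniformly on compact subsets of $\C^*$, where $P(w)=\sum_{k=0}^{l}a_{m_k}w^{k}$ is the facet polynomial with distinct roots $\alpha_1,\dots,\alpha_l$; the decay rate $C'$ is the gap $\min(\nu-b)\ge 1$ between the facet and the sub-facet monomials.

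Next I would solve $g(w,v)=0$ near each root. Since the $\alpha_i$ are simple we have $P'(\alpha_i)\ne 0$, and the perturbation $g-w^{a_0}P$ is $O(|v|^{-1})$ together with all its $w$-derivatives by the Cauchy estimates. Rouch\'e's theorem, or equivalently the holomorphic implicit function theorem with estimates, then produces for $R$ large a unique holomorphic branch $w=w_i(v)$ with $w_i(v)=\alpha_i+O(e^{-C'R})$ on $\{|v|>e^{R}\}$. Differentiating in $v$ via the Cauchy integral formula gives the same exponential bound for all derivatives, so $\mathcal{C}_{f,i}=\{w=w_i(v)\}$ is an $O(e^{-C'R})$-graph over the cylinder $\{z^{m_F}=\alpha_i\}$ in any $C^{k,\alpha}$-norm relative to the flat $\mathrm{Log}$-coordinate.

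For the second assertion, after the hyperk\"ahler rotation $\mathcal{C}_f$ becomes the special Lagrangian $\bar{L}\subset T^*T^2$ and $\{z^{m_F}=\alpha_i\}$ becomes the edge-model cylinder $\bar{L}_0$ of Section~\ref{sec: edge}. The graph bound makes $\bar{L}$ exponentially $C^1$-close to $\bar{L}_0$, so in a Weinstein neighbourhood of $\bar{L}_0\cong S^1\times\R$ it is the graph of a closed $1$-form $\eta$ with $\|\eta\|_{C^{k,\alpha}}=O(e^{-C'R})$. The class $[\eta]\in H^1(\bar{L}_0)=H^1(S^1\times\R)\cong\R$ is detected by the period $\oint_{S^1_t}\eta$, which is independent of the height $t$ yet tends to $0$ as $t\to+\infty$ by the exponential decay; hence it vanishes and $\eta=dc_i$ is exact. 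Choosing the primitive that decays at the infinite end, $c_i(p)=\int_{p}^{\infty}\eta$, gives $\|c_i\|_{C^{k,\alpha}}=O(e^{-C'R})$, and lifting back to $T^*T^n$ by the $T^{n-2}$-invariance keeps the form exact, since its periods over the $T^{n-2}$-cycles vanish because it is pulled back, and preserves the bound.

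The analytically routine part is the implicit function estimate; the step I expect to be the real obstacle is the exactness after hyperk\"ahler rotation. A priori $\bar{L}$ is only the graph of a \emph{closed} $1$-form, and one must exclude a nonzero period around the circle factor of the cylinder. The resolution combines a topological input, that this period is a homotopy invariant and hence height-independent, with the exponential decay that forces it to vanish. One must also check that all estimates are uniform in $R$ and that the decay rate survives the hyperk\"ahler rotation, which, being a fixed linear isomorphism of the relevant structures, alters $C'$ only by a bounded factor.
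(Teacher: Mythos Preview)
Your proposal is correct and follows essentially the same approach as the paper: adapted monomial coordinates reduce the curve equation to an exponentially small perturbation of the facet polynomial, the implicit function theorem yields the graph estimate, and exactness of the closed $1$-form follows from the height-independence of its periods combined with the exponential decay. The only cosmetic difference is that the paper argues vanishing of periods directly on $T^{n-1}\times\R$ for all $\gamma\in H_1(T^{n-1})$, rather than first on the $S^1\times\R$ reduction and then lifting via $T^{n-2}$-invariance.
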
 
	\begin{proof}
		We choose a suitable basis $e_1=m_F,e_2=-m_F'$ of $M\cong \mathbb{Z}^2$. By shifting $\Delta$ by a multiple of $m_F$, we can assume that $\nu=0$, while $\mathcal{C}_f$ is unchanged. Then $f$ can be written as 
		\begin{align*}
			f(z_1,z_2)=a_{m_0}\prod_{i=0}^{l} (z_1-\alpha_i)+\sum_{m\in F^c\cap \Delta}a_m z^m.
		\end{align*} 
		Notice that the power of $z_2$ in $z^m$ is strictly positive if $m\in F^c\cap \Delta$. Along the $n_F$ end, $\log |z_1|$ is bounded while $|z_2| \leq C e^{- R}$. For  small $z_2$, consider the Taylor expansion around $z_1=\alpha_i$.
		For $R\gg 1$, we obtain 
		\begin{align} \label{New Year eve}
			|f(\alpha_i,z_2)|&=\big|\sum_{m\notin F}a_mz^m\big|_{z_1=\alpha_k}\big|<Ce^{-R}, \notag \\
			|\partial_{z_1}f|&\geq \frac{1}{2}\big|a_{m_0}\prod_{i\neq  k}(\alpha_k-\alpha_i)\big|>C^{-1}.
		\end{align} 
		By the implicit function theorem, we can solve $z_1-\alpha_i$ as an analytic function of the exponentially small $z_2$, which implies the exponential asymptote.

		After a suitable hyperk\"ahler rotation, we obtain a special Lagrangian $L$. In a Weinstein neighbourhood, $L$ is the graph of a closed 1-form $\mathfrak{c}_i$ over the asymptotic cylinder. By the exponential decay of this 1-form, for any $\gamma\in H_1(T^{n-1})$ along the asymptotic end, the periods
		$|\int_{\gamma}\mathfrak{c}_i|\leq Ce^{-C'R}$ for any large $R$. Thus these periods are zero, so $\mathfrak{c}_i= dc_i$ is exact. Upon integration we find $c_i=O(e^{-C'R})$.
	\end{proof}

	For any asymptotically cylindrical special Lagrangian
	$L\subset T^*T^n$ whose asymptotic ends $e$ are modelled on the $T^{n-1}$-invariant cylinders as in section \ref{sec: edge}, we can define the primitive vector $f_e\in H_{n-1}(T^n)\simeq H^1(T^n)$, and the phase constant $\hat{\theta}_e:= \theta_e- \cot \hat{\theta} \sum_j g_{ej}\mu_j$ along the asymptotic cylinders.
	The balancing condition for tropical curves reflect homological relation that the $T^{n-1}$ cycles for all the ends sum to zero:
	\[
	\sum_{\text{external edges}}  f_e=0.
	\]
	The above $T^{n-2}$-invariant special Lagrangians $L$ also satisfy the \emph{phase balancing condition}. 
	
	\begin{lem}
		We have the phase balancing condition
		\[
		\sum_{\text{external e}} \hat{\theta}_e= N\pi \mod 2\pi \Z,
		\]
		where $N$ is the number of asymptotically cylindrical ends.
		
	\end{lem}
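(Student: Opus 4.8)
The plan is to reduce the phase balancing identity to a purely algebraic statement about the roots $\alpha_i$ appearing in Lemma~\ref{lem: asymp cyl}, and then to evaluate a product of these roots by Vieta's formulas. Since $L$ is $T^{n-2}$-invariant and its K\"ahler reduction $\bar L\subset T^*T^2$ is the holomorphic curve $\mathcal{C}_f=\mathrm{Zero}(f)$ after hyperk\"ahler rotation, the phase constant $\hat\theta_e$ of an end is governed by the two-dimensional reduced picture. Concretely, along the end modelled on the cylinder $z^{m_F}=\alpha$, I would read off directly from the coordinate change \eqref{eq: HK rel} that the argument of the monomial $z^{m_F}$ is exactly $\hat\theta_e$, that is
\[
\hat\theta_e \equiv \arg\!\big(z^{m_F}\big)\equiv \arg\alpha \pmod{2\pi}.
\]
Indeed, writing $m_F=(a,b)$ in the basis dual to $z_1,z_2$, the imaginary parts in \eqref{eq: HK rel} give $\arg(z^{m_F})=a\theta_1+b\theta_2-\cot\hat\theta\sum_j(ag_{1j}+bg_{2j})\mu_j$, which is precisely $\theta_e-\cot\hat\theta\sum_j g_{ej}\mu_j=\hat\theta_e$ with $\theta_e=a\theta_1+b\theta_2$ and $g_{ej}=ag_{1j}+bg_{2j}$. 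I expect this identification---matching the intrinsic phase constant of each edge with the argument of its asymptotic monomial, with a consistent orientation convention---to be the main point requiring care; the rest is elementary.

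Next I would group the ends according to the facets of the Newton polytope $\Delta$. Fixing a counterclockwise orientation of $\partial\Delta$, each facet $F$ runs from a vertex $v^-$ to the next vertex $v^+$, with primitive along-facet vector $m_F$ and lattice length $l_F$; the $l_F$ ends in the direction $n_F$ correspond bijectively to the roots $\alpha_1,\dots,\alpha_{l_F}$ of $\sum_{k=0}^{l_F}a_{m_k}u^k$, where $u=z^{m_F}$, $m_k=m_0+km_F$, $m_0=v^-$, and $m_{l_F}=v^+$. Vieta's formula for this degree-$l_F$ polynomial yields
\[
\prod_{i=1}^{l_F}\alpha_i=(-1)^{l_F}\,\frac{a_{v^-}}{a_{v^+}},
\]
the constant and leading coefficients being the coefficients attached to the two endpoint vertices of $F$.

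Finally I would multiply over all facets. Since $N=\sum_F l_F$ is the total number of ends, the signs combine to $(-1)^{\sum_F l_F}=(-1)^N$, while the ratios of vertex coefficients telescope around $\partial\Delta$: each vertex $v$ occurs once as the terminal vertex $v^+$ of one facet (in a denominator) and once as the initial vertex $v^-$ of the adjacent facet (in a numerator), so the product of all ratios collapses to $1$. Hence $\prod_{\text{ends}}\alpha=(-1)^N$, and taking arguments,
\[
\sum_{\text{external }e}\hat\theta_e \equiv \arg\!\Big(\prod_{\text{ends}}\alpha\Big)\equiv \arg\!\big((-1)^N\big)\equiv N\pi \pmod{2\pi},
\]
which is the desired phase balancing condition. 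Genericity of $f$ (Lemma~\ref{lem: asymp cyl}) ensures that all roots are distinct and nonzero, so each $\arg\alpha$ is well defined.
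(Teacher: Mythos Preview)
Your proof is correct and follows essentially the same route as the paper's: identify $\hat\theta_e$ with $\arg\alpha$ via the hyperk\"ahler coordinate change \eqref{eq: HK rel}, apply Vieta's formula facet by facet, and telescope over $\partial\Delta$. You supply a bit more detail than the paper on both the identification $\hat\theta_e=\arg(z^{m_F})$ and the telescoping of the vertex coefficients, but the underlying argument is the same.
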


	\begin{proof}
		Along the ends parallel to the $n_F$ direction as above, up to unravelling the hyperk\"ahler rotation, we see $\hat{\theta}_e= \arg \alpha_i$, where $\alpha_1,\ldots \alpha_l$ are the roots. 
		By Vieta's formula we have 
		$\alpha_1\ldots \alpha_l= (-1)^l a_{m_l}/ a_{m_0}$, so
		\[
		\sum_{k=1}^l \mbox{arg}(\alpha_k)=\mbox{arg}(a_{m_l})-\mbox{arg}(a_{m_0} )+ l\pi  \mod 2\pi \Z.
		\]
		Summing over the facets of $\Delta$ gives the result.
	\end{proof}

	\begin{rmk}\label{rmk:wellcentred}
		Likewise for all the ends parallel to the same $f_e$, we have $\sum_1^l \log |\alpha_i |= |a_{m_l}/ a_{m_0}| $. We will say that $\mathcal{C}_f$ is \emph{well centred at the origin}, if $ |\alpha_i|=1  $ along all the ends. (In particular, $|a_m|$ are the same positive number for all vertices $m\in \Delta$.) By the phase balancing condition, the choice of $\arg \alpha_i$ then lies in a (translated) copy of $T^{N-1}$ inside $U(1)^N$, subject to the Zariski open condition that $\alpha_i$ are distinct, and $\mathcal{C}_f$ is smooth. The point is that the projection of all the asymptotic cylinders to $B$ passes through the origin.
	\end{rmk}

	\begin{eg}
		(Trivalent vertices/pair of pants)
		Consider surfaces with three asymptotically cylindrical ends specified by the primitive vectors $e_1, e_2, e_3\in H^1(T^2, \Z)$. Homology imposes the \emph{balancing condition} $e_1+e_2+e_3=0$. Up to a $SL(2,\Z)$ transformation, we can reduce to the standard form $e_1=(-1,0), e_2=(0,1), e_3=(1,-1)$. The algebraic curves with these prescribed ends are given by
		\[
		\{ a z_1+ b z_2=1 \}\subset (\C^*)^2, \quad a, b\in \C^*.
		\]
		The effect of multiplying $a, b$ by real numbers is to translate the pair of pants in the $\R^2$ direction, and the well centred condition means $|a|=|b|=1$. The effect of $\arg a, \arg b\in U(1)$ is to rotate in the $T^2$-direction. 
		The choice of $a, b$ determines the phase constants $\hat{\theta}_e$ for two of the ends, and the phase constant for the third end is determined by the `\emph{phase balancing}' condition.

		In terms of the special Lagrangian, phase balancing has the following intrinsic description. Along the three ends there are three copies of the asymptotically invariant $T^{n-1}$, and $H^1(T^n/T^{n-1}_{e_i})\simeq \Z$ has generators $e_1, e_2, e_3$ respectively. Due to the balancing condition $e_1+e_2+e_3=0$, there is a natural homomorphism
		\[
		T^n/T^{n-1}_{e_1}\times T^n/T^{n-1}_{e_2} \times T^n/T^{n-1}_{e_3}\to U(1).
		\]
		The three asymptotic phase constants are intrinsically valued in $T^n/T^{n-1}_{e_i}$, and we require their product to be equal to $-1\in U(1)$.
	\end{eg}

	\section{Approximate special Lagrangian}

	\subsection{Matching Data and approximate special Lagrangian}\label{sec: matching data}

	In this subsection, we introduce the notion of matching data, which can be viewed as a variant of tropical curves, and then we discuss examples of matching data. 
	\begin{Def}
		A matching datum is a graph $G=(V,E)$, which parametrizes the following data: 
		\begin{itemize}
			\item For each vertex $v\in V$, let  $T_v\cong T^{n-2}$ be a primitive subtorus of $T^n$, and $p_v\in \mbox{Lie}(T_v)^*$.

			\item Let $T^*T^n\sslash_{p_v} T_v$ be the K\"ahler quotient at the moment map value $p_v$, then $X\sslash_{p_v} T_v\cong (\mathbb{C}^*)^2$ by a hyperK\"ahler rotation with coordinates from \eqref{eq: HK rel}. Let $f_v$ be a Laurent polynomial on $(\mathbb{C}^*)^2$, such that its zero locus $\mathcal{C}_v$ is smooth. Section \ref{sec: local models} produces an asymptotically cylindrical special Lagrangian $L_v$ by lifting $\mathcal{C}_v$.

			We require the asymptotic cylindrical ends of $L_v$ are in bijection with the edges $e\in E$ adjacent to $v\in V$. We denote these $T^{n-1}$-invariant asymptotic half cylinders as $L_{v,e}\subset T^*T^n$.

			\item For each edge $e\in E$ between the vertices $v,v'\in V$, we require that the half cylinders $L_{v,e}$ and $L_{v',e}$ are contained in the same $T^{n-1}$-invariant special Lagrangian cylinder $C_e$, and the intersection $L_{v,e}\cap L_{v',e}$ is non-empty, and its projection to the base $B$ is an interval with length $l_e\gg 1$ bigger than some large constant.   
			
		\end{itemize} 
		
	\end{Def}

	\begin{rmk}
		Notice that in the definition of the matching data, a priori there may not be a natural map from $G$ to $B$. This is because the projection of the asymptotic cylinders of the vertex local model to $B$ may not coincide. 
	\end{rmk}

	We can construct an approximate special Lagrangian $L$ from a matching datum as follows. For each internal edge $e$, let $s\in [-l_e/2, l_e/2]$ be the coordinate on the image interval of the cylinder $C_e$. We take a cutoff function
	\begin{align*}
		\chi(s)=\begin{cases}  
			1 & \mbox{ if }s\leq -2 \\
			0 & \mbox{ if }s\geq -1.
		\end{cases}
	\end{align*}
	Let $v,v'$ be the two vertices of $e$.
	By Lemma \ref{lem: asymp cyl}, the asymptotic end $e$ of $L_v$ is  $\mbox{graph}(dc_{v,e})$ for an exponentially decaying function $c_{v,e}$ over $C_e$. We construct the Lagrangian $L$ to agree with $L_v$ in the region with $s\leq -2$ (which has distance $\sim l_e/2\gg 1$ to the core of $L_v$), and replace the $s\geq -2$ end with 
	$
	\mbox{graph}(d\big(c_{(v,e)}\chi(s))).
	$
	Likewise, we can glue $C_e$ to $L_{v'}$ on the region $1\leq s\leq 2$.

	This Lagrangian $L\subset T^*T^n$ is approximately special Lagrangian.

	\begin{lem} \label{lem: ansatz SLAG}
		Given a matching data, then $Err= *\mbox{Im}(e^{i\hat{\theta}}\Omega)|_L$ is supported on the region $s\in [-2,2]$, and $\norm{ Err}_{C^{k,\alpha}} \leq C e^{-C'l_e/2}$, where the constants depend on the geometry of $\mathcal{C}_v$, but not on $l_e$.

	\end{lem}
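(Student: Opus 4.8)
The plan is to show that the error term $Err = *\operatorname{Im}(e^{i\hat\theta}\Omega)|_L$ vanishes identically away from the gluing region, and then to estimate it on the gluing region using the exponential decay from Lemma~\ref{lem: asymp cyl}. First I would observe that by the construction of $L$, outside the interval $s \in [-2,2]$ the Lagrangian $L$ coincides either with the vertex model $L_v$ (for $s \le -2$), with the cylinder $C_e$ (in the overlap region), or with $L_{v'}$ (for $s \ge 2$ after relabelling the edge coordinate for $v'$). Each of these pieces is an honest special Lagrangian of phase $\hat\theta$: the vertex models $L_v$ are special by their construction in Section~\ref{sec: local models} as hyperk\"ahler rotations of holomorphic curves, and the edge cylinder $C_e$ is special by the computation of Section~\ref{sec: edge}. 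Hence $\operatorname{Im}(e^{-i\hat\theta}\Omega)|_L = 0$ on these regions, which establishes that $Err$ is supported on $s \in [-2,2]$.

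Next I would estimate $Err$ on the gluing region $s \in [-2,2]$. Here $L$ is the graph, over the cylinder $C_e$, of the one-form $d\big(c_{v,e}\,\chi(s)\big)$ (or the corresponding expression near $v'$). The key point is that $C_e$ is itself special Lagrangian, so the error is entirely due to the perturbation by the closed one-form $d(c_{v,e}\chi)$. Since $\operatorname{Im}(e^{-i\hat\theta}\Omega)$ restricted to a graph of a closed one-form $\sigma$ over a special Lagrangian vanishes to first order in $\sigma$ (the linearization of the special Lagrangian condition along a Lagrangian deformation is essentially $d^*\sigma$ up to the ambient metric, and vanishes when $\sigma$ is closed and the background is special), I would expand $\operatorname{Im}(e^{-i\hat\theta}\Omega)$ on $\operatorname{graph}(d(c_{v,e}\chi))$ and observe that the leading contribution is governed by the derivatives of $c_{v,e}\chi$. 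Because $\chi$ and its derivatives are bounded by a fixed constant independent of $l_e$, the size of the error is controlled by the $C^{k+1,\alpha}$-norm of $c_{v,e}$ on the region $s \in [-2,2]$.

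Now I would invoke Lemma~\ref{lem: asymp cyl}: the function $c_{v,e}$ is the potential for the exact one-form describing the asymptotic end of $L_v$ over $C_e$, and it satisfies $\|c_{v,e}\|_{C^{k,\alpha}} \le C e^{-C'R}$ where $R$ is the distance along the cylinder from the core of $L_v$. On the gluing region $s \in [-2,2]$, this distance is comparable to $l_e/2$, since the interval coordinate $s$ runs over $[-l_e/2, l_e/2]$ and the matching is arranged so that the two vertices sit at distance $\sim l_e/2$ from the gluing region on either side. Substituting $R \sim l_e/2$ gives $\|c_{v,e}\|_{C^{k+1,\alpha}} \le C e^{-C' l_e/2}$ on the support of $\chi'$, and hence the pointwise and $C^{k,\alpha}$ estimates on $Err$ as claimed, with constants depending only on the geometry of the fixed models $\mathcal{C}_v$ and not on $l_e$.

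The main obstacle I anticipate is the second step: making precise that the nonlinear expansion of $\operatorname{Im}(e^{-i\hat\theta}\Omega)$ on the graph of a closed one-form produces an error controlled purely by the $C^{k+1,\alpha}$-norm of the perturbation potential, with constants uniform in $l_e$. This requires care because although the linearization vanishes on closed forms over a special Lagrangian background, the quadratic and higher-order terms must be bounded uniformly, which relies on the ambient K\"ahler structure being translation-invariant along the cylinder direction so that the geometry of $C_e$ does not degenerate as $l_e \to \infty$. Once the uniformity of these background estimates is secured, the exponential decay from Lemma~\ref{lem: asymp cyl} immediately yields the stated bound.
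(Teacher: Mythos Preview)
Your approach is essentially what the paper has in mind; in fact the paper gives no proof of this lemma at all, treating it as immediate from the construction together with Lemma~\ref{lem: asymp cyl}. Your decomposition into (i) the support claim, via identification with genuine special Lagrangians outside $[-2,2]$, and (ii) the $C^{k,\alpha}$ bound, via the exponential decay of $c_{v,e}$ at distance $\sim l_e/2$, is exactly right.

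There is one inaccuracy worth correcting. You write that the linearization of the special Lagrangian condition on the graph of a closed $1$-form $\sigma$ over a special Lagrangian background ``vanishes when $\sigma$ is closed.'' This is not true: the linearization is $d^*\sigma$, and for $\sigma = du$ this is $-\Delta u$, which is not zero in general. Fortunately the argument does not need the linear term to vanish. The correct statement is simply that $F(\sigma) := *\operatorname{Im}(e^{-i\hat\theta}\Omega)|_{\operatorname{graph}(\sigma)}$ is a smooth function of $\sigma$ and its first derivatives with $F(0)=0$, so $\|F(\sigma)\|_{C^{k,\alpha}} \le C\|\sigma\|_{C^{k+1,\alpha}}$ uniformly once $\|\sigma\|_{C^1}$ is small. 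Since the cylinder geometry is translation invariant, these constants are independent of $l_e$, and plugging in $\sigma = d(c_{v,e}\chi)$ with $\|c_{v,e}\|_{C^{k+2,\alpha}} \le C e^{-C' l_e/2}$ from Lemma~\ref{lem: asymp cyl} gives the bound. So your anticipated ``obstacle'' is not really about quadratic terms specifically, but is the straightforward observation that $F$ is uniformly Lipschitz in $\sigma$ near zero.
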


	\begin{lem}
		We have $\int_{L} \operatorname{Im}(e^{-i\hat\theta}\Omega) = 0$.
	\end{lem}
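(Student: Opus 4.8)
The plan is to show the vanishing of the total period $\int_L \operatorname{Im}(e^{-i\hat\theta}\Omega)$ by an approximation and limiting argument, combining the exponential error estimate from Lemma \ref{lem: ansatz SLAG} with the fact that the corresponding integral over each \emph{exact} local model $L_v$ vanishes identically. First I would observe that $\operatorname{Im}(e^{-i\hat\theta}\Omega)$ is a closed $n$-form on $T^*T^n$, so its restriction to $L$ defines a closed $n$-form on the open manifold $L$; the integral in question is the pairing of this form against the fundamental class of $L$ (noting $L$ is noncompact with cylindrical ends, so one must be careful that the integral converges — this follows because along each end the restriction decays exponentially by the asymptotically cylindrical structure).

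The key structural input is that each vertex model $L_v$ is a genuine special Lagrangian of phase $\hat\theta$, so $\operatorname{Im}(e^{-i\hat\theta}\Omega)|_{L_v} \equiv 0$ exactly, and similarly each edge cylinder $C_e$ is special Lagrangian so $\operatorname{Im}(e^{-i\hat\theta}\Omega)|_{C_e} \equiv 0$. Thus the integrand over $L$ is supported precisely on the transition regions $s \in [-2,2]$ where the cutoff gluing takes place, which is exactly where $Err = *\operatorname{Im}(e^{-i\hat\theta}\Omega)|_L$ lives by Lemma \ref{lem: ansatz SLAG}. I would then write
\[
\int_L \operatorname{Im}(e^{-i\hat\theta}\Omega) = \sum_{e \in E} \int_{L \cap \{s \in [-2,2]\}_e} \operatorname{Im}(e^{-i\hat\theta}\Omega),
\]
and estimate each term by the pointwise bound $\norm{Err}_{C^0} \leq Ce^{-C'l_e/2}$ multiplied by the bounded volume of the transition region (whose extent in $s$ is fixed at length $4$, with the $T^{n-1}$ fibre directions contributing a bounded factor). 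This gives
\[
\left| \int_L \operatorname{Im}(e^{-i\hat\theta}\Omega) \right| \leq C \sum_{e \in E} e^{-C'l_e/2}.
\]

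The conclusion that this quantity is not merely small but \emph{exactly zero} requires an additional argument, and this is the step I expect to be the main obstacle. The point is that the integral is a topological/cohomological quantity: since $\operatorname{Im}(e^{-i\hat\theta}\Omega)$ is closed, the integral $\int_L \operatorname{Im}(e^{-i\hat\theta}\Omega)$ depends only on the homology class represented by $L$ (relative to its behavior at infinity) together with the relative cohomology class of the form. Concretely, I would argue that because the gluing perturbs $L$ within a fixed homology class and the ends are fixed cylinders on which the form restricts to zero, the integral is a deformation invariant; it therefore equals its value for a fully ``idealized'' configuration in which the local models are glued without error, where it manifestly vanishes because each piece is calibrated. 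Equivalently, one can invoke that $\operatorname{Im}(e^{-i\hat\theta}\Omega) = d\beta$ locally with $\beta$ a primitive that is controlled on the cylindrical ends, reducing the integral to boundary contributions at the ends; the exponential decay of the ends forces these boundary terms to vanish. The delicate part is justifying convergence and the Stokes/deformation argument on the noncompact $L$ with its several ends, ensuring no contribution escapes to infinity — but the exponential asymptotics established in Lemma \ref{lem: asymp cyl} make all such boundary-at-infinity terms vanish, pinning the integral to zero rather than to a small nonzero number.
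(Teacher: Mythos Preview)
Your overall strategy --- localize the integrand to the gluing regions, then argue exact vanishing via a deformation/Stokes argument back to the calibrated models --- is correct in spirit and is what the paper does. However, your execution of the key step has a genuine gap: the ``fully idealized configuration in which the local models are glued without error'' does not exist as a smooth submanifold. The vertex models $L_v$ and the cylinders $C_e$ do not coincide on the overlaps (that is precisely why the cutoff is needed), so there is no global comparison Lagrangian on which the integrand vanishes identically to which you can deform $L$. Your alternative formulation via a global primitive $\beta$ with $\operatorname{Im}(e^{-i\hat\theta}\Omega)=d\beta$ is likewise not justified: you neither exhibit such a $\beta$ nor show that its restriction to the cylindrical ends of $L$ has vanishing boundary contribution.

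The paper closes this gap by working \emph{vertex by vertex} rather than globally. Since the integrand is supported in the gluing regions, it suffices to show each local contribution vanishes. Near a vertex $v$, the relevant piece of $L$ is $\tilde L_v$, namely $L_v$ glued to its asymptotic cylinders via the cutoff graphs $d\big(c_{v,e}\chi(s)\big)$. The paper writes down the explicit one-parameter family interpolating $\tilde L_v$ to $L_v$, given by the graphs of $d\big(c_{v,e}(\chi(s)+t(1-\chi(s)))\big)$ for $t\in[0,1]$. Because $c_{v,e}$ decays exponentially (Lemma~\ref{lem: asymp cyl}), Stokes' theorem applies on the noncompact manifold and shows $\int\operatorname{Im}(e^{-i\hat\theta}\Omega)$ is constant along the family. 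At $t=1$ we recover $L_v$, which is genuinely special Lagrangian, so the integral vanishes there; hence it vanishes over $\tilde L_v$ as well. Summing over the vertices gives the lemma. The point you were missing is that the correct ``idealized'' comparison is not a (nonexistent) global object, but the local model $L_v$ at each vertex separately, and the deformation to it is completely explicit.
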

	
	\begin{proof}
		Since the integrand is locally supported, 
		we focus locally near a vertex model $L_v$. Let $\tilde L_v$ be $L_v$ glued to its asymptotic cylinder via the graph of $d\big(c_{(v,e)}\chi(s))$. We can smoothly interpolate $L_v$ to $\tilde L_v$ using the graph of 
		$d(  c_{(v,e)} (\chi(s) + t (1-\chi(s))   )$. Since $c_{v,e}$ has exponential decay, Stokes formula implies that $\int\operatorname{Im}(e^{-i\theta}\Omega)$ is constant under the interpolation, so 
		\begin{align*}
			\int_{\tilde L_v} \operatorname{Im}(e^{-i\theta}\Omega) = \int_{L_v} \operatorname{Im}(e^{-i\theta}\Omega) = 0.
		\end{align*}
		Summing over the local contributions give the result.
	\end{proof}

	\subsection{From tropical curves to matching data}

	There is a large supply of matching data from locally planar tropical curves.

	\begin{prop}\label{prop: tropicaltomatchingdata}
		Let $(G,h)$ be a locally planar tropical curve in $B$. Then there exists a family of matching data parametrized by $T\gg 1$ and an open dense set $U\subset U(1)^{|E|-|V|}$, such that 
		\begin{itemize}
			\item The subtori $T_v$ are independent of $T$, and $\mathcal{C}_v$ are independent of $T$ up to translation in $B$.

			\item  The projection of the asymptotic half cylinders $L_{v,e,T}$ to $B$, is the ray from $Th(v)$ along the direction of $h(e)$.
		\end{itemize}

	\end{prop}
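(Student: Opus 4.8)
The plan is to construct the matching data vertex-by-vertex from the localized tropical curves, and then verify that the edge-matching conditions hold after rescaling by $T$. First I would associate to each vertex $v\in V$ a vertex model as follows. Since $(G,h)$ is locally planar, the edges adjacent to $v$ span a rank-two sublattice of $H_1(T^n,\Z)\simeq \Z^n$, so there is a primitive complementary subtorus $T_v\cong T^{n-2}$ on which the localized model is invariant, and a corresponding rank-two quotient lattice on which the $f_e$ for edges $e$ adjacent to $v$ live. The balancing condition at $v$ guarantees that these $f_e$ satisfy $\sum_e f_e=0$, which is exactly the homological constraint allowing a single smooth holomorphic curve $\mathcal{C}_v\subset(\C^*)^2$ (via \eqref{eq: HK rel}) whose asymptotic ends, under $\mathrm{Log}$, run off precisely in the primitive directions of the $f_e$. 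Concretely, $\mathcal{C}_v$ is the zero locus of a Zariski-generic Laurent polynomial $f_v$ whose Newton polytope $\Delta_v$ is the dual polygon determined by the primitive outward normals $f_e$; its facets correspond bijectively to the edges $e$ adjacent to $v$, with each facet contributing a single unbounded end (here unit edge weights are used, so each facet has lattice length one and thus exactly one root $\alpha$). By Remark \ref{rmk:wellcentred} I would normalize $f_v$ to be well centred at the origin, so that $|a_m|$ is constant over $\Delta_v$ and every asymptotic cylinder projects through the origin of the quotient base.

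Next I would position these models in $B$ by translation. The second bullet demands that the projection of $L_{v,e,T}$ to $B$ be the ray from $Th(v)$ along $h(e)$; since $\mathcal{C}_v$ is well centred, all its asymptotic cylinders project through the origin of its quotient base, so after the hyperkähler identification the vertex model $L_v$ projects to a configuration of rays through a single point, which I translate so that this point sits at $Th(v)$. This places $\mathcal{C}_v$ independently of $T$ up to translation in $B$, giving the first bullet; the subtori $T_v$ are manifestly $T$-independent. For the edge condition: given an internal edge $e$ between $v$ and $v'$, the two rays $\pi(L_{v,e,T})$ and $\pi(L_{v',e,T})$ lie along $h(e)$ and emanate from $Th(v)$, $Th(v')$ respectively in \emph{opposite} outward directions, so as $T\to\infty$ their overlap is an interval of length $l_e\sim T\cdot\mathrm{length}(h(e))\to\infty$, satisfying $l_e\gg 1$. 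I must also ensure that the two half-cylinders $L_{v,e}$ and $L_{v',e}$ lie in the \emph{same} $T^{n-1}$-invariant cylinder $C_e$: they are cylinders over the same lattice direction $f_e$ (forced by $h(e)$), and the matching of their full location is governed by the $T^{n-1}/$phase data, namely the value of $\hat\theta_e$ and the transverse base position. The transverse position matches because both project onto the line through $h(e)$; the angular/phase degrees of freedom are precisely the $\arg\alpha_i$ at each vertex, living in the torus from the phase balancing lemma.

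The main obstacle, and the role of the open dense set $U\subset U(1)^{|E|-|V|}$, is reconciling the phase constants along shared edges. The phase balancing lemma constrains the product of the $\arg\alpha_e$ at each vertex, leaving a torus of free phase choices at each vertex; to glue into a consistent matching datum one needs, for every internal edge $e$, the phase constant $\hat\theta_e$ computed from $v$ to agree (as an element of $T^n/T^{n-1}_e$) with that computed from $v'$. Counting: the total phase freedom across all vertices is a torus of dimension $\sum_v(\mathrm{val}(v)-1)=2|E|-|V|$ (since $\sum_v\mathrm{val}(v)=2|E|$ for internal edges, with external edges contributing their own unconstrained phases), and matching across the $|E|$ internal edges imposes $|E|$ conditions, leaving a parameter torus of the expected dimension $|E|-|V|$ of free parameters; the residual choices are indexed by $U(1)^{|E|-|V|}$. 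I would phrase the matching as a surjective homomorphism of tori and take $U$ to be the locus where all the resulting $\alpha$'s are distinct and every $\mathcal{C}_v$ is smooth — both are Zariski-open and nonempty conditions, hence their common refinement is open and dense. The one point requiring care is that the \emph{well centred} normalization ($|\alpha_e|=1$) is compatible with the base-translation bookkeeping: since rescaling the coefficients $a_m$ by positive reals only translates $\mathcal{C}_v$ in $B$, I can always arrange $|\alpha_e|=1$ without disturbing the ray directions, and the only genuine freedom left is angular, which is exactly what $U$ parametrizes. Finiteness and nonemptiness of the remaining data then follow, completing the construction.
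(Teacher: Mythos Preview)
Your overall strategy coincides with the paper's: build well-centred vertex models from the Newton polytopes dictated by the local balancing condition, translate to $Th(v)$, and then match the phase constants $\hat\theta_e$ along internal edges subject to the per-vertex phase balancing constraint. The paper organizes the phase problem as $|E|$ unknowns $\hat\theta_e$ subject to $|V|$ linear equations on $U(1)$, rather than your half-edge formulation, but these are equivalent viewpoints.

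There is, however, a real gap in your argument. You assert that the matching ``can be phrased as a surjective homomorphism of tori'' and that the smooth/distinct-root locus is ``Zariski-open and nonempty,'' but you prove neither. A dimension count by itself does not establish surjectivity of a torus homomorphism, and without it you cannot conclude that the solution set is a torus of dimension $|E|-|V|$. More importantly, density of $U$ requires that the evaluation map from the global solution torus to the local phase torus $U(1)^{N_v-1}$ at \emph{each} vertex be surjective, so that the local Zariski-open genericity condition pulls back to something nontrivial; you do not address this. The paper fills both gaps with a short inductive argument: the phase constant of an external edge can always be solved from the remaining edges at its vertex, so one removes vertices one by one (former internal edges becoming external for the residual graph), which both exhibits the solution set as $U(1)^{|E|-|V|}$ and, by arranging any chosen vertex to be removed last, shows the evaluation map there is onto.

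Two smaller issues. First, your claim that ``each facet has lattice length one'' is false in general: several edges at $v$ may share the same primitive direction, giving a facet of length $l_i>1$ and $l_i$ asymptotic ends; the paper explicitly collects parallel edges when building $\Delta_v$. Second, your dimension count is garbled ($\sum_v\mathrm{val}(v)=2|E_{\mathrm{int}}|+|E_{\mathrm{ext}}|$, not $2|E|$, and only $|E_{\mathrm{int}}|$ matching conditions are imposed), though the final answer $|E|-|V|$ happens to be correct.
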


	\begin{proof}
		For each vertex $v\in V$, let $(G_v,h_v) $ be the local tropical curve, whose edges $e$ determine outward pointing primitive directions $f_e$. The span of $f_e$ is a 2-dimensional lattice $N$, which determine $T_v\simeq T^{n-2}\subset T^n$ whose Lie algebra is annihilated by the $f_e$. The points $h(v)\in B$ determine $p_v\in \text{Lie}(T_v)^*$. We take a 1-parameter family of dilations $p_{v,T}=Tp_v$.

		The symplectic reduction $T^*T^n\sslash_{p_v} T_v$ is identified with the big torus in $(\C^*)^2\subset N_\C$.
		By  the balancing condition
		$
		\sum_e  f_e=0.
		$
		We collect together all the $f_e$ pointing in the same directions $n_i$, so $\sum l_i n_i=0$ counting the number of parallel edges $l_i$. Up to translation, there is a unique Newton polytope $\Delta_v \subset M= N^*$, whose facets have outward normals $n_i$ and lengths $l_i$.
		We consider \emph{well centred} $\mathcal{C}_f$ for Laurent polynomials $f=f_v'$ with Newton polytope $\Delta_v$ (see Remark \ref{rmk:wellcentred}). The choices of $f_v'$ are subject to the \emph{phase balancing condition}, and a Zariski open condition.

		Now the tropical curve data $h(v)\in B\simeq H^1(T^n, \R)$ determine a point $\bar{h}(v)$ in $N_\R\simeq H^1(T^n,\R)/H^1(T_v, \R)\simeq \R^2$. We  translate $\mathcal{C}_{f}$ by the amount $T\bar{h}_v\in N_\R$, which gives $\mathcal{C}_{v,T}$. Up to reversing the hyperk\"ahler rotation, we can then lift $\mathcal{C}_v$ to a special Lagrangian $L_{v,T}\subset T^*T^n$. Its asymptotic (half) cylinders $L_{v,e}$ are labelled by the edges $e$ adjacent to $v\in V$. Thanks to the well-centred condition, the projection of $L_{v,e}$ to $B$ is the ray starting from $h(v)\in B$ and pointing in the direction $f_e$.

		For each edge $e$ between $v,v'$, we need to ensure that $L_{v,e}$ and $L_{v',e}$ to be contained in the same cylinder $C_e$. The invariant $T^{n-1}$ direction is determined by $f_e$, and the projection of $L_{v,e}$ and $L_{v',e}$ to $B$ are contained in the same line determined by $Th(e)$. It suffices to ensure that the phase constants $\hat{\theta}_e$ match up. We need to solve the system of phase balancing constraint equations:
		\begin{equation}\label{eqn: phasebalancing}
			\sum_{ \text{e adjacent to v}  } \hat{\theta}_e= N_v\pi \mod 2\pi\Z,
		\end{equation}
		where $N_v$ is the number of edges $e$ emanating from $v$.

		We claim that the solution of (\ref{eqn: phasebalancing}) is parametrized by $U(1)^{|E|-|V|}$. For this, we notice that the phase constant of an external edge can always be solved in terms of the other edges, so we can remove vertices one by one and proceed by induction. Moreover, we can evaluate the global solutions $U(1)^{|E|-|V|}$ to the local solution set $U(1)^{N_v-1}$ for the phase balancing condition at any given $v\in V$. This evaluation map is surjective (eg. one can see this by arranging $v$ to be the last vertex to be removed). The Zariski closed condition for $\mathcal{C}_f$ to be singular, or for the roots to collide, defines a closed and nowhere dense subset of $U(1)^{|E|-|V|}$. The upshot is that we have an open dense subset of phase parameters $U\subset U(1)^{|E|-|V|}$, such that the well centred conditions, phase balancing, and smoothness conditions are all satisfied. 
		
		Finally, we notice that the length of the projection of $L_{v,e}\cap L_{v',e}$ is $T$ times the length of $h(e)$, which is arbitrarily large for $T\gg 1$.
	\end{proof}

	\section{Perturbing to special Lagrangian}\label{sec: perturbation}
	
	In this section, we prove the main technical theorem:

	\begin{thm}\label{thm:main}
	Given a family of matching data, such that the subtori $T_v$ and the geometry of the vertex models $\mathcal{C}_v$ are fixed. Let
	\[
		l_{min}= \min_{\text{internal edges e}} l_e,\quad 	l_{max}= \max_{\text{internal edges e}} l_e.
	\]
	Suppose $l_{max}/l_{min}\leq C$ is uniformly bounded, while $l_{min}\to +\infty$.  
	Then for $l_{min}$ sufficiently large, there is a special Lagrangian which is the graph of a $C^{k,\alpha}$-small closed 1-form over the preglued Lagrangian $L$.
	\end{thm}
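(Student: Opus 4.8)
The plan is to set up the special Lagrangian equation as a perturbation problem over the preglued Lagrangian $L$ and to solve it by a quantitative inverse function theorem, with the uniform right inverse of Prop.~\ref{prop:dstar} as the analytic engine. Fix a Weinstein neighbourhood of $L$ in $T^*T^n$, so that $C^1$-small Lagrangian sections are identified with graphs of $C^1$-small closed $1$-forms $\alpha$ on $L$; in particular the Lagrangian condition $\omega|_{\mathrm{graph}(\alpha)}=0$ is automatic once $d\alpha=0$, and we are reduced to the single scalar equation
\begin{equation*}
\mathcal{F}(\alpha):= *\big(\operatorname{Im}(e^{-i\hat\theta}\Omega)|_{\mathrm{graph}(\alpha)}\big)=0,
\end{equation*}
where $*$ is the Hodge star of the induced metric and we identify the pulled-back $n$-form with a function on $L$. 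By Lemma~\ref{lem: ansatz SLAG}, $\mathcal{F}(0)$ equals the error $Err$ up to sign, supported in the gluing regions with $\|\mathcal{F}(0)\|_{C^{k,\alpha}}\le Ce^{-C'l_{min}/2}$. Following McLean's computation, the linearisation of $\mathcal{F}$ at a genuinely special Lagrangian model is $d^*$ on closed $1$-forms; since $L$ is only approximately special, I write $\mathcal{F}(\alpha)=\mathcal{F}(0)+d^*\alpha+\widetilde{Q}(\alpha)$, where $\widetilde{Q}$ collects the genuinely quadratic remainder together with a linear error term whose operator norm is $O(\|Err\|)$ and hence negligible.

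The equation becomes $d^*\alpha=-\mathcal{F}(0)-\widetilde{Q}(\alpha)$. The cohomological solvability condition is exactly that the right-hand side integrate to zero against the volume form: this holds for $\mathcal{F}(0)$ by the preceding lemma $\int_L\operatorname{Im}(e^{-i\hat\theta}\Omega)=0$, and it propagates to every iterate because, for any closed $\alpha$, $\mathrm{graph}(\alpha)$ is homologous to $L$ and $\operatorname{Im}(e^{-i\hat\theta}\Omega)$ is closed, whence $\int_L\mathcal{F}(\alpha)=\int_{\mathrm{graph}(\alpha)}\operatorname{Im}(e^{-i\hat\theta}\Omega)=0$; subtracting the identity $\int_L d^*\alpha=0$ forces $\int_L\widetilde{Q}(\alpha)=0$. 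Thus the parametrix-type right inverse $P$ of Prop.~\ref{prop:dstar}, which produces closed (though not necessarily exact) $1$-forms and enjoys a bound $\|P\eta\|_{C^{k+1,\alpha}}\le C\|\eta\|_{C^{k,\alpha}}$ uniform as $l_{min}\to\infty$, may be applied to the right-hand side at each stage. I recast the problem as the fixed-point equation $\alpha=\mathcal{N}(\alpha):=P\big(-\mathcal{F}(0)-\widetilde{Q}(\alpha)\big)$ on a small ball in the (exponentially weighted, to control the unbounded external ends) Hölder space of closed $1$-forms.

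It remains to verify that $\mathcal{N}$ is a contraction on the ball $\{\|\alpha\|\le\rho\}$ for a suitable radius $\rho$. The starting term satisfies $\|P\mathcal{F}(0)\|\le C\|Err\|\le Ce^{-C'l_{min}/2}$, which is much smaller than $\rho$ once $l_{min}$ is large; the standard quadratic estimate $\|\widetilde{Q}(\alpha)-\widetilde{Q}(\beta)\|\le C(\|Err\|+\|\alpha\|+\|\beta\|)\|\alpha-\beta\|$, combined with the uniform bound on $P$, yields $\|\mathcal{N}(\alpha)-\mathcal{N}(\beta)\|\le\tfrac12\|\alpha-\beta\|$ and $\mathcal{N}(\{\|\alpha\|\le\rho\})\subseteq\{\|\alpha\|\le\rho\}$ with, say, $\rho=2C\|Err\|$. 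The Banach fixed-point theorem then produces a closed $1$-form $\alpha$, of the asserted $C^{k,\alpha}$-small size, whose graph is special Lagrangian.

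The main obstacle is of course Prop.~\ref{prop:dstar}: obtaining a right inverse to $d^*$ whose norm does not degenerate as the edges lengthen. Inverting the scalar Laplacian $d^*d$ on functions (which would yield an exact, i.e.\ Hamiltonian, perturbation) is hopeless here, because the long cylindrical necks of length $l_e$ carry eigenvalues of order $l_e^{-2}$, so the inverse would blow up like $l_{max}^2$. The resolution is to solve $d^*\alpha=\eta$ directly on closed $1$-forms: on each neck one integrates a first-order ODE in the longitudinal variable, whose solution operator is uniformly bounded, and a partition of unity glues these local solutions to the vertex solutions, the patching error being absorbed by the iteration. This is precisely why the resulting perturbation is closed but not exact, and pushing the uniform bound through the partition-of-unity parametrix is the technical heart of the argument.
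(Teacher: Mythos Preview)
Your overall strategy---Weinstein neighbourhood, McLean linearisation $\mathcal{F}(\alpha)=\mathcal{F}(0)+d^*\alpha+\widetilde Q(\alpha)$, zero-average constraint via homology and Stokes, then Banach iteration with Prop.~\ref{prop:dstar} as right inverse---matches the paper's Section~\ref{sec: perturbation}. However, you have glossed over a norm subtlety that the paper handles carefully, and in doing so you never invoke the hypothesis $l_{max}/l_{min}\le C$, which should be a warning sign.

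The gap is that Prop.~\ref{prop:dstar} does \emph{not} give a bound $\|P\eta\|_{C^{k+1,\alpha}_\delta}\le C\|\eta\|_{C^{k,\alpha}_\delta}$ uniform in $l_{min}$. It only says $P\eta=\beta_1+\sum_v c_v\,d\chi_v$ with $\|\beta_1\|_{C^{k+1,\alpha}_\delta}+\sum_v|c_v|\le C\|\eta\|_{C^{k,\alpha}_\delta}$. The extra pieces $c_v\,d\chi_v$ are supported in the gluing regions at cylindrical parameter $t\sim l_e/2$, so their \emph{weighted} norm costs a factor $e^{-\delta l_e/2}$, exponentially large since $\delta<0$. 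Consequently, when you feed $P\eta$ into the quadratic term and measure $\widetilde Q(P\eta)$ back in $C^{k,\alpha}_\delta$, a bad factor $e^{-\delta l_{max}/2}$ appears; your contraction estimate as stated is false. The paper deals with this by iterating on the scalar $f$ (with $\beta=\tilde P f$), writing the contraction constant explicitly as $Ce^{-\delta l_{max}/2}\big(e^{-C'l_{min}/2}+\|f_1\|_\delta+\|f_2\|_\delta\big)$, and then choosing $|\delta|<C'l_{min}/(10\,l_{max})$ so that $e^{-\delta l_{max}/2}$ is beaten by the genuine smallness $e^{-C'l_{min}/4}$ of the ball radius. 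This is precisely where the bounded-ratio hypothesis enters.

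Your closing sketch of Prop.~\ref{prop:dstar} is also off. The paper does not integrate a first-order ODE along the necks. It works on the asymptotically cylindrical vertex models $\tilde L_v$: first it subtracts $\sum_v x_v\Delta\chi_v$, with $(x_v)$ solved from a finite linear system $Ax=b$ whose matrix $A_{vv'}=\int_L\nabla\chi_v\cdot\nabla\chi_{v'}$ has one-dimensional kernel, to kill the local obstructions $b_v=\int_L\chi_v f$; then it applies a Fredholm Laplacian on each $\tilde L_v$ with domain augmented by end cutoff functions (Lemma~\ref{lem:zero_average}); finally it cuts off the resulting exact $1$-forms $\beta_v$ and iterates away the cutoff error. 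The non-exactness of the global $\beta$ comes from the correction term $\sum_v x_v\,d\chi_v$ and from the mismatch of integration constants for $\beta_v$ along different ends of the same vertex, not from an ODE integration on the cylinders.
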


By Prop. \ref{prop: tropicaltomatchingdata}, this immediately implies Thm. \ref{thm:tropical}.
The rest of the section is devoted to proving
	Theorem~\ref{thm:main}.

 By the definition of matching data, for every
	edge $e$ there is a $T^{n-1}$-invariant special
	Lagrangian cylinder $C_e$, and for each vertex $v$ we
	can find a $T^{n-2}$-invariant special Lagrangian model $L_v$ whose
	asymptotic cylinders are given by $C_e$, where $e$ are adjacent edges
	of $v$. Thus two adjacent vertices share a common asymptotic cylinder.
	
	Let $L$ be the preglued Lagrangian submanifold. We summarize the
	properties of $L$ as follows:
	
	\begin{itemize}
		
		\item Let $\ell_e$ denote the length of the finite cylinder in
		$C_e$. Then we choose the cylindrical coordinate
		$s \in [-\frac{\ell_e}{2},\frac{\ell_e}{2}]$ so that $s$ is
		increasing in the same direction of the orientation of $e$. The
		cylinder is glued to the vertex models $L_v, L_{v'}$ in the regions
		$s\in [-2, -1]$ and
		$[1, 2]$, respectively.

		\item Let $t$ denote the cylindrical coordinate of $L_v$ along the end
		asymptotic to $C_e$. The region $t \in [l_e/2-2, l_e/2-1]$ in $L_v$ is glued to
		$C_e$.
		
		\item The initial error
		$Err = *\operatorname{Im}(e^{-i\hat\theta}\Omega)|_{L}$ is
		supported in the gluing region, with $Err(t) \le Ce^{-C' l_e/2}$ for
		some constant $C'>  0$.
	\end{itemize}

	To perturb $L$ to a genuine special Lagrangian submanifold, we will
	follow the implicit function theorem approach. To study the
	linearized problem, we adopt the parametrix method, which requires
	studying the invertibility of the linearized operators on vertex and
	cylinder models.
	
	To this end, we fix a smooth partition of unity $\{ \chi_v\}$ with $\sum_v \chi_v=1$ and $0\leq \chi_v\leq 1$ as follows:
	
	\begin{itemize}
		
		\item For vertices $v$, we define $U_v$ to be the region with $t\leq3l_e/4$ along the internal edges, and $t$ unconstrained for the infinite edges. Thus the union of $U_v$ cover the entire $L$.

		\item  For each vertex $v$, we require
		$ \chi_v $ is supported in $U_v$, such that $\chi_v=1$ in the region with $t\leq l_e/2-1$ for all the internal edges, and $\chi_v=0$ for $t\geq l_e/2+1$. In particular, $\chi_v=1$ along the external edges.

	\end{itemize}

	\subsection{Linear analysis}

	For functions or 1-forms $f \in C^{k,\alpha}_{loc}(L)$, we define the weighted norms
	\begin{align*}
		\|f\|_{C^{k,\alpha}_\delta(L)}
		= \sum_{v} \| \chi_v f\|_{C^{k,\alpha}_\delta(U_v)}.
	\end{align*}
	The main result is

	\begin{prop}\label{prop:dstar}
		The following holds for $-C'< \delta < 0$ sufficiently small, and $l_{min}\gg 1$ sufficiently large. Let $f \in C^{k,\alpha}_{\delta}(L)$
		satisfy $\int_{L} f = 0$. Then there exists a closed 1-form $\beta\in C^{k+1,\alpha}_\delta(L)$ solving
		$d^*\beta= f$ such that
		\[
		\beta=\beta_1+ \sum_v c_v d\chi_v,\quad \|\beta_1 \|_{C^{k+1,\alpha}_\delta(L)} + \sum_v |c_v|  \le C\|f\|_{C^{k,\alpha}_\delta(L)} \]
		for some uniform constant. Moreover, we can require $\beta$ to be exact inside every vertex region $U_v$.
	\end{prop}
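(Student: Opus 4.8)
The plan is to invert $d^*$ on each model piece, patch the local solutions with the partition of unity $\{\chi_v\}$, and correct the residual error by a Neumann-type iteration. Since $\beta$ is required to be closed, it is equivalent to work with potentials: wherever $\beta=d\phi$ is exact, the equation $d^*\beta=f$ becomes $\Delta\phi=f$. I will therefore solve a corrected equation $\Delta\phi_v=\chi_v f$ on each (complete, asymptotically cylindrical) vertex model $L_v$, and reassemble the local potentials into a global \emph{closed} form; adding only $d\phi_v$'s and the exact pieces $c_v\,d\chi_v$ keeps the result closed and exact on each $U_v$. The geometry of each $L_v$ is fixed (independent of $T$ and of the lengths $l_e$), so every estimate on the pieces will be uniform, which is what the statement demands.

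First I would record the linear theory on a single vertex model. Each $L_v$ has cylindrical cross-section $T^{n-1}$, so the indicial roots of $\Delta$ are $0$ and $\pm\sqrt{\lambda_k}$, where $\lambda_k>0$ are the positive eigenvalues of the cross-sectional Laplacian and $\sqrt{\lambda_1}=C'$ is the decay rate furnished by Lemma~\ref{lem: asymp cyl}. For any weight $\delta\in(-C',0)$, avoiding indicial roots, the map $\Delta\colon C^{k+2,\alpha}_\delta(L_v)\to C^{k,\alpha}_\delta(L_v)$ is Fredholm; integration by parts against the constant shows its image is exactly $\{g:\int_{L_v}g=0\}$, and a decaying harmonic function vanishes, so the kernel is trivial. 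This gives a right inverse $Q_v$ with $\|Q_v g\|_{C^{k+2,\alpha}_\delta}\le C\|g\|_{C^{k,\alpha}_\delta}$, uniform over the finitely many vertex types.

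The essential point is to remove the one-dimensional cokernel on each piece \emph{before} patching, which is precisely the role of the terms $c_v\,d\chi_v$. Set $F_v:=\int_L\chi_v f$; the hypothesis $\int_L f=0$ gives $\sum_v F_v=0$. I would then solve the linear system $\sum_w M_{vw}c_w=F_v$ with $M_{vw}:=\int_L\chi_v\,\Delta\chi_w$. Because $d\chi_w$ is supported in the unit-length gluing necks, the $M_{vw}$ are $O(1)$ fluxes independent of the $l_e$, and $M$ coincides up to such fluxes with the combinatorial Laplacian of $G$: it is symmetric, annihilates the constant vector (as $\sum_w\chi_w\equiv1$), and is invertible on $\{\sum_v F_v=0\}$ with inverse bounded uniformly in $l_{min}$. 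Solving for $\{c_v\}$ with $\sum_v|c_v|\le C\|f\|$, the corrected source $\tilde f:=f-\sum_w c_w\,\Delta\chi_w$ satisfies $\int_L\chi_v\tilde f=0$ for every $v$, so each local problem $\Delta\phi_v=\chi_v\tilde f$ is solvable by $Q_v$ with uniform bounds. Assembling $\beta_1^{(0)}:=d\big(\sum_v\tilde\chi_v\phi_v\big)$, where $\tilde\chi_v\equiv1$ on $\mathrm{supp}\,\chi_v$ transplants the decaying $\phi_v$ onto $L$, and setting $\beta^{(0)}:=\beta_1^{(0)}+\sum_v c_v\,d\chi_v$, one computes $d^*\beta^{(0)}=f+E$ with $E=\sum_v E_v$, $E_v:=\Delta(\tilde\chi_v\phi_v)-\tilde\chi_v\Delta\phi_v$ a commutator supported in the necks. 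A short check gives $\int_L E_v=0$ for each $v$, so the global constraint is preserved along the iteration.

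The main obstacle is the convergence of the iteration, and it is genuinely subtle because of the neck weight. Decomposing $E$ into its fibrewise average over $T^{n-1}$ (the zero mode) and the higher Fourier modes, the higher-mode part of $\phi_v$ decays at rate at least $C'>|\delta|$, so at the neck centre it is $O(e^{-C'l_e/2})$; multiplied by the neck weight $e^{-\delta t}=e^{|\delta|l_e/2}$ this contributes $O\big(e^{(|\delta|-C')l_e/2}\big)\|f\|$ to $\|E\|_{C^{k,\alpha}_\delta}$, which is small since $|\delta|<C'$. The zero mode, however, decays only at rate $|\delta|$, so it is $O(1)$ in the weighted norm and cannot be iterated away directly — this is exactly the non-decaying constant mode along the cylinders that would make a naive Neumann series diverge. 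The resolution is that this zero mode is $O(e^{-|\delta|l_e/2})$ in the \emph{unweighted} integral, hence its averages $\int_L\chi_u E$ are exponentially small, so re-solving the finite-dimensional $M$-system at each stage removes it at negligible cost, while the higher modes contract geometrically. Organizing the iteration as ``invert modulo the finite-dimensional graph cokernel'' — alternately solving $M$ for the current zero mode and applying $\bigoplus_v Q_v$ to the higher modes, and checking that neither sector re-excites the other beyond the contraction factor — is the crux of the argument; once it converges, the limit $\beta=\beta_1+\sum_v c_v\,d\chi_v$ solves $d^*\beta=f$, is closed and exact on each $U_v$, and fails to be globally exact only because $\chi_v\equiv1$ along the external ends forces its potential out of the weighted space.
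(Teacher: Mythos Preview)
Your overall architecture matches the paper's: remove the vertex obstructions $\int_L\chi_v f$ by solving a finite linear system (your matrix $M$ is exactly the paper's $-A_{vv'}=-\int_L\nabla\chi_v\cdot\nabla\chi_{v'}$, with the same positive-semidefinite, one-dimensional-kernel argument), then solve $\Delta\phi_v=\chi_v\tilde f$ on each asymptotically cylindrical model via Lockhart--McOwen, transplant by a cutoff, and iterate away the commutator. The discrepancy lies entirely in the cutoff placement and the iteration analysis.

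Your contraction step has a genuine gap. You place the transplantation cutoff $\tilde\chi_v$ with $\tilde\chi_v\equiv 1$ on $\operatorname{supp}\chi_v$, so its transition region sits at the neck centre $t\sim l_e/2$. You then assert that the higher Fourier modes of $\phi_v$ are $O(e^{-C'l_e/2})$ there; but this is false, because the source $\chi_v\tilde f$ is itself supported all the way out to $t=l_e/2+1$, so $\phi_v$ is not harmonic until past that point, and at $t\sim l_e/2$ its size is controlled only by the weighted bound $|\phi_v|\le Ce^{\delta l_e/2}$. Multiplying by the weight $e^{-\delta l_e/2}$ coming from the neighbouring vertex gives an $O(1)$ commutator in $C^{k,\alpha}_\delta(L)$, with no contraction. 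Moreover, your diagnosis of the obstruction as a zero-mode phenomenon is off: since $\phi_v$ is harmonic and decaying on each half-cylinder $t>l_e/2+1$, its fibrewise average there is an affine function tending to zero, hence identically zero; the commutator $E_v$ consists entirely of higher modes, so re-solving the $M$-system at each stage does not remove the difficulty.

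The paper's resolution is much simpler and avoids any mode decomposition: place the transplanting cutoff \emph{asymmetrically}, at $t\sim 2l_e/3$, a distance $\sim l_e/6$ beyond $\operatorname{supp}\chi_v$ and $\sim l_e/3$ from the other vertex. Then the commutator is $O(e^{2\delta l_e/3})$ pointwise (directly from $\|\phi_v\|_{C^{k+2,\alpha}_\delta}\le C$), and since it sits at distance $l_e/3$ from $v'$, its global weighted norm is $O(e^{\delta l_e/3})$. This contracts immediately and a Neumann series finishes the proof.
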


	\begin{rmk}
		A rather subtle issue is that if we restrict to \emph{globally exact} 1-forms, we can find a unique solution $\beta\in C^{k+1,\alpha}_\delta(L)$ solving $d^* \beta =f$, but then we cannot in general guarantee the uniform estimate in $T$. We note also that the global exactness condition is equivalent to the local exactness when the graph $\Gamma$ is a \emph{tree}, but not when $\Gamma$ contains loops.
	\end{rmk}

	We first begin with a simple general lemma.
	
	\begin{lem}
		\label{lem:zero_average}
		Let $(L,g)$ be a fixed
		asymptotically cylindrical manifold  with $N$ ends, and suppose $\delta<0$ is small enough. Let
		$f \in C^{k,\alpha}_\delta(L)$ such that $\int_L f=0$. Then there exists a unique exact 1-forms $\beta\in C^{k+1}_\delta(L)$ solving $d^*\beta= f$, such that
		$ 
		\|\beta\|_{C^{k+1,\alpha}_\delta(L)}  \le C\|f\|_{C^{k,\alpha}_\delta(L)}  $. 
		
	\end{lem}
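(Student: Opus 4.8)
The plan is to reduce the first-order problem $d^*\beta=f$ to a scalar Laplace equation by seeking $\beta$ in the form $\beta=du$. Since $d^*d=\Delta$ is the (nonnegative) Hodge Laplacian on functions, solving $d^*\beta=f$ with $\beta$ exact is equivalent to solving $\Delta u=f$ and setting $\beta:=du$; the bound $\|\beta\|_{C^{k+1,\alpha}_\delta}\le\|u\|_{C^{k+2,\alpha}_\delta}$ is then immediate, so everything reduces to the weighted Fredholm theory of $\Delta$ on the asymptotically cylindrical manifold $(L,g)$.

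First I would recall the standard Lockhart--McOwen theory: on an asymptotically cylindrical manifold $\Delta:C^{k+2,\alpha}_\delta(L)\to C^{k,\alpha}_\delta(L)$ is Fredholm precisely when $\delta$ is not an indicial root. Separating variables $u=e^{\lambda t}\phi(y)$ on a model end $\mathbb{R}_+\times Y_i$ shows the indicial roots are $\lambda=\pm\sqrt{\mu}$ for $\mu$ an eigenvalue of $\Delta_{Y_i}$; the lowest eigenvalue $\mu=0$ produces the double root $\lambda=0$, so $0$ is the critical weight and every small $\delta\in(-\epsilon,0)$ is admissible. For such $\delta$ I would first record uniqueness: a harmonic function in $C^{k+2,\alpha}_\delta$ decays, so integration by parts on $L$ (with vanishing boundary terms at infinity) gives $\int_L|du|^2=0$, hence $u$ is constant, hence zero. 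Thus $\ker\Delta|_{C_\delta}=0$, which will supply both the uniqueness of $u$ and the a priori estimate once surjectivity onto the right subspace is established.

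The step I expect to be the crux is matching the solvability condition with the single constraint $\int_L f=0$ while keeping $\beta$ genuinely exact. The subtlety is that an exact, exponentially decaying $1$-form need not have a decaying potential: on a manifold with $N\ge 2$ ends its potential may tend to different constants on different ends (e.g. a compactly supported $du$ equal to $0$ near one end and $1$ near another). I would therefore solve $\Delta u=f$ not in $C^{k+2,\alpha}_\delta$ but in the enlarged domain $\widehat C^{k+2,\alpha}_\delta:=C^{k+2,\alpha}_\delta\oplus\operatorname{span}\{\psi_1,\dots,\psi_N\}$ of asymptotically locally constant potentials, where $\psi_i$ is a cutoff equal to $1$ on the $i$-th end and $0$ on the others. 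On this domain $\Delta$ remains Fredholm; its kernel is the global constants (an asymptotically locally constant harmonic function has $du\in L^2$, hence is globally constant), while Stokes' theorem forces $\int_L\Delta u=0$ for every $u$ in the domain. An index count then identifies the cokernel as one-dimensional, spanned by the constant function, so the range is exactly $\{f:\int_L f=0\}$.

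Given $f$ with $\int_L f=0$, I would solve $\Delta u=f$ with $u=w+\sum_i c_i\psi_i$, $w\in C^{k+2,\alpha}_\delta$, and set $\beta:=du=dw+\sum_i c_i\,d\psi_i$. Since each $d\psi_i$ is compactly supported and $dw\in C^{k+1,\alpha}_\delta$, the form $\beta$ lies in $C^{k+1,\alpha}_\delta(L)$, is exact, and satisfies $d^*\beta=\Delta u=f$. The Fredholm estimate on the enlarged operator, after quotienting by the global constants (which do not affect $\beta$), yields $\|w\|_{C^{k+2,\alpha}_\delta}+\sum_i|c_i|\le C\|f\|_{C^{k,\alpha}_\delta}$ and hence $\|\beta\|_{C^{k+1,\alpha}_\delta}\le C\|f\|_{C^{k,\alpha}_\delta}$. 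Finally, uniqueness among exact decaying $1$-forms follows as in the previous step: if $\beta,\beta'$ are two such solutions, their difference is exact, decaying, and annihilated by $d^*$, so its potential is harmonic and asymptotically locally constant, therefore globally constant, and the difference vanishes.
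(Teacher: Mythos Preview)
Your proof is correct and follows essentially the same route as the paper: both reduce to $\Delta u=f$, enlarge the domain $C^{k+2,\alpha}_\delta$ by the span of end cutoffs $\psi_i$ (the paper's $c_i$), compute the kernel as the global constants via integration by parts, and use the index count to identify the one-dimensional cokernel with the constraint $\int_L f=0$. Your write-up is a bit more explicit about the indicial roots and the subtlety that a decaying exact $1$-form can have a potential with distinct asymptotic constants on different ends, but the argument is the same.
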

	
	\begin{proof}
		Suppose $-\delta>0$ is smaller than the exponential convergence rate of the cylindrical end, and the  first indicial root of the Laplacian on the asymptotic cylinders. By Lockhart-McOwen theory, for such $\delta < 0$, the Laplacian on functions
		$\Delta: C^{k+2,\alpha}_{\delta}(L) \to C^{k,\alpha}_{\delta}(L)$ is
		Fredholm with index $-N$.

		Let $c_1, c_2, \ldots, c_N$ be a partition of unity on $L$ such that
		$c_i=1$ on the $i$-th end and is equal to zero on the other ends.  
		By standard index theory, the Laplace
		operator
		\begin{align*}
			\Delta:
			C^{k+2,\alpha}_{\delta}(L) \oplus \operatorname{Span}_{\mathbb{R}}\{c_1,c_2,\ldots, c_N\}
			\to
			C^{k,\alpha}_{\delta}(L)
		\end{align*}
		has index $0$. To determine its kernel, suppose that
		$ 
		\Delta u = 0
		$ 
		for some $u = u_0 + \sum_i b_i c_i$, where
		$u_0 \in C^{k+2,\alpha}_{\delta}(L)$ and $b_i \in \mathbb{R}$. Then we have
		\begin{align*}
			0 = \int_L u\Delta u  = - \int_L |\nabla u|^2.
		\end{align*}
		Thus $u_0 = 0$ and $b_1 = b_2 = \cdots = b_N$. It follows that the
		kernel has dimension $1$, and so does the cokernel. The condition
		$\int_L f = 0$ implies that $f$ lies in the image of
		$\Delta$. Now modulo constant functions,
		\[
		C^{k+2,\alpha}_{\delta}(L) \oplus \operatorname{Span}_{\mathbb{R}}\{c_1,c_2,\ldots, c_N\}  \]
		correspond precisely to the exact 1-forms on $L$ in $C^{k+1,\alpha}_\delta$, hence the result.
	\end{proof}

	The main problem is how to make the estimate uniform for $L$ when $T$ is large. We will address this by a parametrix construction. Let $f\in C^{k,\alpha}_\delta(L)$ satisfy $\int_{L} f=0$.

	We need to remove the vertex 
	integrals $b_v = \int_{L} \chi_v f$, as they obstruct the
	solvability in the local model. For each vertex
	$v$, we will find real numbers $x_v$ such that the new function
	\begin{equation}
		f_2 = f + \sum_{v} x_{v}\Delta\chi_{v} 
	\end{equation}
	satisfies
	$
	\int_{L} \chi_v f_2 = 0
	$
	for all vertices $v$. By the Green formula
	\[
	A_{vv'} := \int_{L} \nabla \chi_{v} \cdot \nabla \chi_{v'}= -\int_{L} \chi_v \Lap \chi_{v'} ,
	\]
	this boils down to solving the following linear system:
	\begin{align*}
		\sum_{v'} A_{vv'}x_{v'} = b_v \:\:\: \text{ for all vertex } v.
	\end{align*}

	\begin{lem}
		The $|V|\times |V|$ coefficient matrix
		$A_{vv'} $ is positive
		semi-definite, whose $1$-dimensional kernel is generated by $(1,1, \cdots, 1)$.
	\end{lem}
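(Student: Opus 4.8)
The plan is to recognize $A_{vv'}$ as (essentially) a weighted graph Laplacian and exploit the quadratic form interpretation. For any vector $x=(x_v)_{v\in V}\in\R^{|V|}$, I would compute
\[
\sum_{v,v'} A_{vv'}\,x_v x_{v'}
= \int_{L}\Big(\sum_v x_v\nabla\chi_v\Big)\cdot\Big(\sum_{v'} x_{v'}\nabla\chi_{v'}\Big)
= \int_{L}\Big|\nabla\Big(\sum_v x_v\chi_v\Big)\Big|^2\ \ge 0 .
\]
This immediately yields positive semi-definiteness. It also shows that for a positive semi-definite matrix the kernel coincides with the null cone of the quadratic form, so $x\in\ker A$ if and only if the function $\phi_x:=\sum_v x_v\chi_v$ has vanishing gradient on all of $L$.

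Next I would check that $(1,1,\dots,1)$ lies in the kernel. Since $\{\chi_v\}$ is a partition of unity, $\sum_{v'}\chi_{v'}\equiv 1$, hence for every fixed $v$,
\[
\sum_{v'} A_{vv'} = \int_{L}\nabla\chi_v\cdot\nabla\Big(\sum_{v'}\chi_{v'}\Big) = \int_{L}\nabla\chi_v\cdot\nabla(1)=0,
\]
so every row sums to zero and $A(1,\dots,1)^{T}=0$.

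The main content is showing the kernel is no larger than this line, and here the specific construction of the partition of unity is essential. Suppose $x\in\ker A$, so $\nabla\phi_x=0$. Because the pre-glued Lagrangian $L$ is connected (the underlying graph $G$ of the matching data is connected), $\phi_x$ is a global constant $c$. To identify $c$ with each $x_v$ separately, I would evaluate $\phi_x$ in the core region of the vertex model $L_v$: there every adjacent end has bounded cylindrical coordinate $t$, so in particular $t\le l_e/2-1$ along all internal edges, where by construction $\chi_v\equiv 1$; since $0\le\chi_{v'}\le 1$ and $\sum_{v'}\chi_{v'}=1$, this forces $\chi_{v'}\equiv 0$ there for all $v'\neq v$. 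Thus $\phi_x=x_v$ on the core of $L_v$, giving $x_v=c$ for every $v\in V$. The one subtlety to state carefully is that this core region is nonempty and contained in the locus $\{\chi_v=1\}$ for $l_{min}$ large, which is guaranteed by the gluing setup; granting this, $x=c(1,\dots,1)$ and the kernel is exactly one-dimensional.
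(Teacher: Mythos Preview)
Your proof is correct and follows essentially the same approach as the paper: compute the quadratic form $x^T A x=\int_L|\nabla(\sum_v x_v\chi_v)|^2\ge 0$, and use that equality forces $\sum_v x_v\chi_v$ to be constant, which together with the partition of unity property yields $x_v=c$ for all $v$. You are simply more explicit than the paper about why constancy of $\phi_x$ implies all $x_v$ coincide (evaluating on the core region where $\chi_v=1$) and about the connectedness of $L$.
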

	
	\begin{proof}
		Let $y=(y_v) \in \mathbb{R}^{V}$. We have
		\begin{align*}
			y^TAy = \int_{L} |\sum_v y_v\nabla \chi_v|^2 \ge 0.
		\end{align*}
		Equality holds if and only if $\nabla(\sum y_v\chi_v) = 0$, i.e.
		$\sum y_v\chi_v = c$ is constant. Since $\chi_v$ is a partition of
		unity, equality holds if and only if $y_v = c$ for all $v$.
	\end{proof}
	
	Since
	\begin{align*}
		\sum_v b_v 
		= \sum_v \int_{L} \chi_v f 
		= \int_{L} f
		= 0,
	\end{align*}
	the vector $(b_v)$ is orthogonal to
	$\operatorname{ker} A^T =
	\operatorname{span}_{\mathbb{R}}\{(1,1,\ldots,1)\}$, so we can find $x=(x_v)$ solving
	the linear system $Ax=b$, with
	\begin{equation}
		|x_v|\le C\sum_{v'} |b_{v'}| \leq C\|f\|_{C^{k,\alpha}_\delta(L)},\quad \forall v,
	\end{equation}
	for a uniform constant $C$.

	We recall $\int_{L} \chi_v f_2=0$. For each vertex $v$, we apply Lemma~\ref{lem:zero_average} to find an exact 1-form $\beta_v\in C^{k,\alpha}_\delta(\tilde{L}_v)$ on the (slightly perturbed) vertex model spaces $\tilde{L}_v$ which agrees with the local part of $L$, such that 
	$d^* \beta_v= \chi_v f_2$.

	To construct an approximate solution globally, we need to cut off the solution. On the asymptotically cylindrical model $\tilde{L}_v$, in the end along any internal edge direction $e$, we can write
	$
	\beta_v= du_{v,e}$ for a unique function $ u_{v,e}\in C^{k+2,\alpha}_\delta.
	$
	For the vertex $v$, and any cylindrical end of $\tilde{L}_v$ along an internal edge direction $e$, we take another cutoff function $\tilde{\chi}_{v,e}$, supported in $t\geq 2l_e/3$, and equals one on $t\geq 2l_e/3+1$. Since $l_e\gg 1$, by construction the support of $\chi_v$ is far separated from the support of $\tilde{\chi}_{v,e}$, by a distance $\sim l_e/6$. We modify $\beta_v$ to $\beta_v'=\beta_v- \sum_e d(\tilde{\chi}_{v,e} u_{v,e})$. This is still an exact 1-form on $\tilde{L}_v$ in the weighted space $C^{k,\alpha}_\delta$, but now $\beta_v'=0$ for $t\geq 2l_e/3+1$, so we can regard $\beta_v'$ as a 1-form on $L$.

	\begin{rmk}
		A subtlety here is that if we write the exact 1-form $\beta_v$ as $du_v$ for some global function $u_v$ on $\tilde{L}_v$, then $u_v= u_{v,e}+ \text{const}$, where the constants are generally different along the different ends. If we regard $u_v$ as a local function on an open subset of $L$, it is unclear how to \emph{consistently extend $u_v$ to a global function} on $L$, without introducing some additional cutoff error $du_v$ somewhere else in $L$. This error is \emph{not small}. The upshot is that to gain uniform estimates, we should sacrifice global exactness, but only preserve exactness on the local regions $U_v$.
		
		
	\end{rmk}

	We can now write down the parametrix
	$
	Pf
	= \sum_v \beta_v' + \sum_v x_vd\chi_v.
	$
	Thus 
	\[
	\begin{split}
		d^*(Pf)-f = & \sum_v d^*\beta_v'+ \sum_v  x_vd^*d \chi_v-f
		\\
		= & \sum_v d^*\beta_v'- \sum_v x_v\Lap \chi_v- f
		\\
		= & \sum_v (d^*\beta_v'- \chi_v f_2).
	\end{split}
	\]
	But $d^*\beta_v'- \chi_v f_2$ is due to the cutoff error supported on $2l_e/3\leq t\leq 2l_e/3+1$. It is bounded by
	\[
	\norm{d^*\beta_v'- \chi_v f_2}_{C^{k,\alpha}}\leq C e^{2\delta l_e/3}\norm{\beta_v}_{C^{k+1,\alpha}_\delta(\tilde{L}_v) } \leq  C e^{2\delta l_e/3} \norm{ f   }_{C^{k,\alpha}_\delta (L) } .
	\]
	Now on $L$, this error is supported at distance $\sim l_e/3$ from the other vertex with edge $e$, so 
	\[
	\norm{d^*\beta_v'- \chi_v f_2}_{C^{k,\alpha}_\delta(L)  } \leq C e^{\delta l_e/3} \norm{ f   }_{C^{k,\alpha}_\delta (L) }.
	\]
	The upshot is that $\norm{ d^* Pf-f }\leq C\sum_{v,e} e^{\delta l_e/3} \norm{f} $.

	Define $R = I- d^* P $. When $l_{min}=\min_e l_e\gg 1$, we can
	ensure that the operator norm $\|R\| \leq C \max_{v,e} e^{\delta l_e/3} \ll \frac{1}{100}$. Then 
	\begin{align*}
		\tilde{P} = P\sum_{k=0}^\infty R^k 
	\end{align*}
	is a right inverse of $d^*$ with uniform estimate.
	This completes the proof of Proposition~\ref{prop:dstar}.

	\subsection{Nonlinear iteration}
	
	We now set up the implicit function theorem argument.

	Since $L$ has uniform geometry (i.e. $L$ has
	uniform second fundamental form bound and injective radius lower
	bound), by the Weinstein Lagrangian neighbourhood theorem there exists some uniform $r_0 > 0$,
such that for each $T > 0$ there exists a
	symplectomorphism
	\begin{align*}
		\Psi: B_{r_0}(0_{L}) \subset T^*L \longrightarrow U_L \subset X, \:\:\: \Psi(0_{L}) = L.
	\end{align*}
	We seek a closed 1-form $\beta$ on $L$ which is exact on the vertex regions $U_v$, such that the perturbed Lagrangian
	$
	L^\beta := \Psi(\operatorname{graph}(\beta))
	$
	is special Lagrangian with desired estimates.
	Define the nonlinear differential operator $F(\beta)$ which sends the closed 1-forms $\beta$ with $\norm{\beta}_{C^{k+1,\alpha}_\delta}\leq r_0$, to the $n$-form $\operatorname{Im}(e^{-i\hat\theta}\Omega)|_{L^\beta}$ pulled back to $L$ via the graph $\beta$, which is then identified as a function on $L$.
	Since $L^\beta$ and $L$ have the same asymptote at infinity up to exponentially decaying error, by Stokes formula and Lemma \ref{lem:zero_average},
	\begin{equation}\label{eqn:integralzero}
		\int_{L^\beta} \operatorname{Im}(e^{-i\hat\theta}\Omega) =0,
	\end{equation}
	so crucially $\int_{L} F(\beta)=0 $.
	The goal then is to find $\beta$ such that $F(\beta) = 0$.

	Recall that the initial error $Err$ is supported in $t\sim l_e/2$, and $\norm{Err}_{C^{k,\alpha}} \leq C e^{-C'l_e/2}$. 
	We suppose $-C'/2\leq  \delta<0$ is small enough so that the linear theory applies. 
	The Taylor expansion of $F(\beta)$ gives
	\begin{align*}
		F(\beta) =  Err - d^* \beta +  Q(\beta),
	\end{align*}
	where for $\norm{\beta}_{C^{k+1,\alpha} }\ll 1$, we have the quadratic estimate for local $C^{k,\alpha}$-norms,
	\[
	\norm{Q(\beta)}_{ C^{k,\alpha} } \leq C\norm{\beta}_{C^{k,\alpha} }(\norm{\theta_T-\hat{\theta} }_{C^{k,\alpha}}+ \norm{\beta}_{C^{k,\alpha}}) \leq C\norm{\beta}_{C^{k,\alpha} }(  e^{-C' l_{min}/2} + \norm{\beta}_{C^{k,\alpha}}) 
	\]
	and for any such $\beta_1, \beta_2$, 
	\[
	\norm{Q(\beta_1)- Q(\beta_2) }_{ C^{k,\alpha}} \leq C (  e^{-C' l_{min}/2}  +\norm{ \beta_1 }_{C^{k,\alpha} }+ \norm{\beta_2}_{C^{k,\alpha} }) \norm{\beta_1-\beta_2}_{ C^{k,\alpha} }).
	\]

	For $f\in C^{k,\alpha}_\delta(L)$ with $\int_{L} f=0$, we have the right inverse $\tilde{P}$ to $d^*$. We seek a solution to $F(\beta)=0$ in the form $\beta= \tilde{P} f$, namely
	\[
	Err- f + Q(\tilde{P} f)=0.
	\]
	In other words, we want to find a fixed point of the nonlinear map
	\begin{align*}
		N(f) = Err +  Q(\tilde{P} f).
	\end{align*}
	Using the uniform norm estimate in Prop. \ref{prop:dstar}, 
	\[
	\tilde{P} f= \beta'+ \sum_v c_v d\chi_v,\quad    \norm{\beta'}_{C^{k+1,\alpha}_\delta} + \sum_v |c_v| \leq C \norm{f}_{C^{k,\alpha}_\delta}. 
	\]
	Thus  for any functions $f_1, f_2$ with small $\norm{f_i}_{C^{k,\alpha}_\delta} \leq C e^{-C' l_{min}/4} $,  we have
	\[
	\begin{split}
	&	\|N(f_1) - N(f_2)\|_{C^{k,\alpha}_\delta(L)} 
		\\
	\leq 	&  C e^{-\delta l_{max}/2}(  e^{-C'l_{min}/2} +\norm{f_1}_{C^{k,\alpha}_\delta(L)}+  \norm{f_2}_{C^{k,\alpha}_\delta(L)} )  \|f_1-f_2\|_{C^{k,\alpha}_\delta(L)}.
	\end{split}
	\]
	Notice that the bad exponentially large factor $e^{-\delta l_{max}/2}$ shows up because the $\sum_v c_v d\chi_v$ term does not have the same decay as $\beta'$.
	We now pick $\delta$ so that $|\delta| < C' l_{min}/ (10 l_{max})$. 
	The result then follows from  Banach iteration.
	This proves Thm. \ref{thm:main}.

\end{document}